\DeclareMathOperator*{\argmin}{arg\,min}
\NewDocumentCommand{\Esp}{}{\mathbb{E}}
\NewDocumentCommand{\Prob}{}{\mathbb{P}}
\NewDocumentCommand{\Var}{}{\mathrm{Var}}
\NewDocumentCommand{\D}{}{\mathcal{D}}
\NewDocumentCommand{\M}{}{\mathcal{M}}
\NewDocumentCommand{\Rbb}{}{\mathbb{R}}
\NewDocumentCommand{\tauH}{}{\tau_{H}}
\NewDocumentCommand{\Fn}{o}{\IfNoValueTF{#1}{\hat{F}_n}{\hat{F}_{n_#1}}}
\NewDocumentCommand{\Sn}{o}{\IfNoValueTF{#1}{\hat{S}_n}{\hat{S}_{n_#1}}}
\NewDocumentCommand{\Gn}{o}{\IfNoValueTF{#1}{\hat{G}_n}{\hat{G}_{n_#1}}}
\NewDocumentCommand{\Ln}{o}{\IfNoValueTF{#1}{\hat{\Lambda}_n}{\hat{\Lambda}_{n_#1}}}
\NewDocumentCommand{\Dn}{o}{\IfNoValueTF{#1}{D_n}{D_{n_#1}}}
\NewDocumentCommand{\indicator}{m}{\mathds{1}\{#1\}}
\NewDocumentCommand{\muThat}{o}{\IfNoValueTF{#1}{\hat{\mu}_{\tau,n}}{\hat{\mu}_{\tau,n_#1}}}
\NewDocumentCommand{\intT}{}{\int_0^{\tau}}
\NewDocumentCommand{\khat}{}{\widehat{k}}
\NewDocumentCommand{\ktilde}{}{\widetilde{k}}
\NewDocumentCommand{\Hhat}{m}{\widehat{H}_{#1}}
\NewDocumentCommand{\Htilde}{m}{\widetilde{H}_{#1}}
\NewDocumentCommand{\PnBn}{}{P_{B_n}}
\NewDocumentCommand{\psihat}{}{\widehat{\psi}}
\NewDocumentCommand{\thetahat}{}{\widehat{\theta}_{n}}
\NewDocumentCommand{\thetatilde}{}{\widetilde{\theta}_{n}}
\NewDocumentCommand{\EspBn}{}{\Esp_{B_n}}
\NewDocumentCommand{\Pni}{}{P^{\otimes n_1}}
\NewDocumentCommand{\po}{}{^{\text{\normalfont po}}}
\NewDocumentCommand{\thetahatpo}{}{\widehat{\theta}_{n}\po}
\NewDocumentCommand{\thetatildepo}{}{\widetilde{\theta}_{n}\po}
\NewDocumentCommand{\xin}{}{\xi_{n_1}}
\NewDocumentCommand{\bi}{}{\big}
\NewDocumentCommand{\bii}{}{\Big}
\NewDocumentCommand{\biii}{}{\bigg}
\NewDocumentCommand{\biiii}{}{\Bigg}
\NewDocumentCommand{\midi}{}{\bigm|}
\NewDocumentCommand{\midii}{}{\Bigm|}
\newtheorem{theorem}{Theorem}
\newtheorem{corollary}{Corollary}[theorem]
\newtheorem*{théorème*}{Théorème}
\newtheorem*{définition*}{Définition}
\newtheorem{corollaryF*}{Corollaire}
\newtheorem{proposition*}{Proposition}
\newtheorem*{propriété*}{Propriété}
\newtheorem*{lemma*}{Lemma}
\newtheorem{lemma}{Lemma}
\newtheorem{assumption}{Assumption}
\newtheorem*{hypothèse*}{Hypothèse}
\theoremstyle{remark}
\newtheorem{ex}{Example}
\title{Pseudo-Observations and Super Learner for the \\ Estimation of the Restricted Mean Survival Time}
\author[]{Ariane Cwiling}
\author[]{Vittorio Perduca}
\author[]{Olivier Bouaziz}
\affil[]{Université Paris Cité, CNRS, MAP5, F-75006 Paris, France}
\date{}
\begin{document}

\maketitle

\begin{abstract}
    In the context of right-censored data, we study the problem of predicting the restricted time to event based on a set of covariates. Under a quadratic loss, this problem is equivalent to estimating the conditional Restricted Mean Survival Time (RMST). To that aim, we propose a flexible and easy-to-use ensemble algorithm that combines pseudo-observations and super learner. The classical theoretical results of the super learner are extended to right-censored data, using a new definition of pseudo-observations, the so-called split pseudo-observations. Simulation studies indicate that the split pseudo-observations and the standard pseudo-observations are similar even for small sample sizes. The method is applied to maintenance and colon cancer datasets, showing the interest of the method in practice, as compared to other prediction methods. We complement the predictions obtained from our method with our RMST-adapted risk measure, prediction intervals and variable importance measures developed in a previous work.
    
    \textbf{Keywords:} Right-censoring, RMST, prediction, stacking, pseudo-observations, super learner.
\end{abstract}

\section{Introduction}
Predicting the time to an event of interest based on a set of covariates is a relevant goal in applications. For example, in medical applications, it could be interesting to predict the time to onset of cancer, relapse or death of a patient, while in industrial applications, we might be interested in predicting the time to failure of a mechanical part. Due to tail issues caused by right-censoring, it is common to focus on the restricted time to event instead of the time to event itself~\citep[see][]{eaton_designing_2020}. In practice, this prediction problem is equivalent to estimating the Restricted Mean Survival Time (RMST) when a quadratic loss is used. The RMST is a clinically meaningful quantity that has gained attention over the years  for its simplicity and interpretability. The RMST can be easily retrieved by integrating an estimator of the survival function, yet new approaches have been developed to directly model it~\citep[see][]{andersen_regression_2004, tian_predicting_2014, wang_modeling_2018}. Remarkably, these methods avoid strong modeling assumptions such as the proportional hazard assumption from the Cox model. 

In particular, pseudo-observations, introduced by~\cite{andersen_regression_2004}, have enabled the application of a large range of RMST estimation models. Pseudo-observations are a transformation of the incomplete observed times that have the strong following property~\citep[see][]{jacobsen_note_2016}: Their conditional expectation is equal, up to a remainder term, to the conditional RMST. As a result, they can be fed into any prediction model adapted to uncensored data, from generalized linear models~\citep{andersen_regression_2004} to neural networks~\citep{zhao_deep_2021}. In practical applications, the choice of the best learner among all of available machine learning algorithms for uncensored data is a difficult question, as the prediction performance of the different algorithms may depend on the application. As a result, an attractive alternative is to use ensemble methods, such as the super learner algorithm~\citep[see][]{van_der_laan_super_2007}. When dealing with uncensored data, it has been proven that, under mild assumptions, the learner with the lowest cross-validated risk in a user-defined library of algorithms, the so-called discrete super learner, performs as well as the best algorithm in the library up to an error term~\citep{van_der_laan_unified_2003, dudoit_asymptotics_2005, van_der_vaart_oracle_2006}. This result can be exploited to construct the best linear combination of all the learners in the library, the so-called continuous super learner. This stacking method has the theoretical guarantee to perform asymptotically at least as well as any of the candidate learners. The continuous super learner is simply termed ``super learner'' in what follows.

The goal of this work is to adapt the super learner algorithm for RMST estimation with right-censored data. The super learner for analyzing censored data has already been the subject of previous research. For instance~\citet{golmakani_super_2020} proposed a survival super learner that takes censored data as input and applies a library of survival algorithms. However, the models used in this super learner must verify the proportional hazard assumption, which is a strong limitation of the approach. Another example is the super learner with Inverse Probability Censoring Weight (IPCW) loss~\citep[see][]{van_der_laan_unified_2003, keles_asymptotically_2004, gonzalez_ginestet_stacked_2021, devaux_individual_2022}, which allows to use any survival model as long as the conditional independence assumption holds and the censoring distribution is consistently estimated. Finally, \citet{sachs_ensemble_2019} and \citet{gonzalez_ginestet_survival_2022} proposed two super learner methods with a pseudo-observations-based-AUC loss. However, to our knowledge, none of the pseudo-observations-based super learner methods are provided with theoretical convergence guarantees.

In this work, we present a novel approach combining the super learner with pseudo-observations for the estimation of the RMST.  In order to derive theoretical results similar to those of~\citet{van_der_laan_super_2007} in the uncensored case, we first present a new type of pseudo-observations, namely the \textit{split pseudo-observations}. Those are introduced, along with the standard pseudo-observations, in Section~\ref{sec::pobs}. In Section~\ref{sec::SL}, we present in details the super learner based on both standard and split pseudo-observations. Theoretical results for the combination of split pseudo-observations with the super learner are also derived. The performance of our method is studied extensively through simulations in Section~\ref{sec::simulations} and on two real datasets in Section~\ref{sec::realdata}.

\section{Pseudo-observations} \label{sec::pobs}

In the context of right-censored data, we denote by $T^*$ the time to the event, $C$ the censoring time, $T=T^* \wedge C$ the observed time and $\delta=\indicator{T^*\leq C}$ the censoring indicator. An observation is then represented by the vector $O = (T,\delta,Z)$ where $Z\in\Rbb^d$ is a covariate vector. We note $S(t\mid Z) = \Prob(T^* > t \mid Z)$ the survival function of $T^*$ conditionally on the covariates $Z$.
Let $\tauH = \inf\{t>0 : \Prob(T > t \mid Z)= 0 \text{ a.s.} \}$. The RMST is defined for a fixed time horizon $\tau < \tauH$, conditionally on the covariates, as
\begin{equation*}
     \Esp[T^* \wedge \tau \mid Z] = \intT S(t \mid Z) dt.
\end{equation*}
Given this definition, the RMST can be estimated for instance by integrating an estimator of the conditional survival function between $0$ and $\tau$, or by regressing the restricted event times on covariates. In the second case, censoring must be taken into account since the times $T^*$ are not observed for all individuals. This can be achieved by using pseudo-observations. 
Consider censored observations $D_n=\{O_i = (T_i,\delta_i,Z_i), i=1,\dots,n\}$, where $n$ is the sample size. Classical pseudo-observations are computed in the following way: For a given 
$\tau < \tau_H$ and all $i=1,\dots,n$,
\begin{equation}\label{eq::pobs}
    \Gamma_i := n \int_0^{\tau} \hat{S}(t) dt - (n-1) \int_0^{\tau} \hat{S}^{-i}(t) dt,
\end{equation}
where $\hat{S}$ is the Kaplan-Meier estimator of the survival function computed on all data and $\hat{S}^{-i}$ is the same estimator computed on all data but the $i$-th observation.
The interest of pseudo-observations for regression purposes lies in the following result by \citet{jacobsen_note_2016}:
\begin{equation*}
    \Gamma_i = \Esp[T^* \wedge \tau] + \int_0^{\tau} \Dot{\phi}(O_i)(t) dt + \xi_n,
\end{equation*}
where  $\xi_n = o_\Prob(1)$, $\phi$ is the first order influence function in the Von Mises expansion of the Kaplan-Meier estimator, and
\begin{equation*}
    \Esp\left[\int_0^{\tau} \Dot{\phi}(O_i)(t) dt \midii Z_i\right] = \Esp[T^* \wedge \tau \mid Z_i] - \Esp[T^* \wedge \tau],
\end{equation*}
so that 
\begin{equation}\label{eq::pobs_jacobsen}
    \Esp[\Gamma_i \mid Z_i] = \Esp[T^* \wedge \tau \mid Z_i] + \Esp[\xi_n \mid Z_i].
\end{equation}
This result is valid under the following independent censoring assumption. 
\begin{assumption}[Independent censoring] \label{ass::indep_cens}
    The censoring time $C$ and the pair of variables $(T^*,Z)$ are independent.
\end{assumption}

Pseudo-observations are, by construction, correlated with each other, which makes it difficult to study their theoretical properties. To deal with this issue, 
we propose a new type of pseudo-observations, called \textit{split pseudo-observations}. The idea is to split the data in two subsets $\Dn[1]$ and $\Dn[2]$ of size $n_1$ and $n_2 = n-n_1$, respectively. The former is used to compute the Kaplan-Meier estimator and the latter for the pseudo-observations. We then define a new type of pseudo-observations as follows:
\begin{equation}\label{eq::split_pobs}
    \Gamma_i(\Dn[1]) = \Gamma_{O_i}(\Dn[1]) := (n_1+1) \int_0^{\tau} \hat{S}_{\Dn[1]}^{+i}(t) dt - n_1 \int_0^{\tau} \hat{S}_{\Dn[1]}(t) dt, 
\end{equation}
where $O_i$ is an observation in $\Dn[2]$, $\hat{S}_{\Dn[1]}$ is the Kaplan-Meier estimator of the survival function computed on the $n_1$ data points in $\Dn[1]$ and $\hat{S}_{\Dn[1]}^{+i}$ is the same estimator computed on the $n_1 + 1$ data points obtained by adding $O_i$ to the sample $\Dn[1]$. The main advantage of this construction is that the new pseudo-observations constructed for all the observations in $\Dn[2]$ are independent conditionally on $\Dn[1]$. A result similar to Equation~\eqref{eq::pobs_jacobsen} can then be easily derived for those split pseudo-observations. Under Assumption~\ref{ass::indep_cens}:
\begin{equation}\label{eq::pobs_split_cond_mean_def}
    \Esp[\Gamma_i(\Dn[1]) \mid Z_i,\Dn[1]] = \Esp[T^*_i \wedge \tau \mid Z_i] + \Esp[\xin \mid Z_i, \Dn[1]],
\end{equation}
where $\xin = o_\Prob(1)$. 
From a theoretical standpoint, we establish in Section~\ref{sec::SL} finite sample and asymptotic results for the super learner coupled with split pseudo-observations. In Section~\ref{sec::simulations}, we observe on simulated data that split and traditional pseudo-observations  are very similar, and the choice between them has minimal impact on the prediction quality. Therefore, while split pseudo-observations are easier to study for theoretical results, split and traditional pseudo-observations can be used interchangeably in applications. 

\section{Super Learner} \label{sec::SL}

In this section we introduce a super learner algorithm for right-censored data based on pseudo-observations. Our aim is to predict the restricted time to event with a quadratic loss - the Mean Squared Error (MSE) - using the super learner. It is well known that, under this loss, the best prediction model is the conditional expectation of the restricted time. Therefore, our super learner algorithm can also be seen as an estimator of the RMST. We start by reintroducing the classical super learner for uncensored observations along with a fundamental theoretical result from~\cite{dudoit_asymptotics_2005} and we then derive a new result for our proposed approach.

We can formalize the problem in the following way. Let $\psi : \M \to \D(\mathcal{Z})$ be a parameter mapping a distribution from a statistical model $\M$ to an element in the space $\D(\mathcal{Z})$ of real-valued functions defined on a $d$-dimensional Euclidean set $\mathcal{Z} \subset \Rbb^{d}$. A realization $\psi = \psi(P)$ of $\psi$ for a given $P \in \M$ belongs to the parameter space $\Psi := \{ \psi(P): P \in \M\} \subset \D(\mathcal{Z})$. Consider a loss function
\[
    L : (\psi,O) \mapsto L(\psi,O) \in \Rbb, \text{ for } \psi \in \Psi, O \sim P.
\]
The quantity of interest is the \textit{risk} of the parameter $\psi$ for a distribution $P$,
\[
    \Theta(\psi,P) = \int L(\psi,o) dP(o).
\]
Given this definition, the risk minimizer is defined as
\begin{equation*}
    \psi^* = \psi^*(P) = \argmin_{\psi \in \Psi} \Theta(\psi,P) = \argmin_{\psi \in \Psi} \int L(\psi,o) dP(o),
\end{equation*}
and characterizes the optimal risk
\begin{equation}\label{eq::optimal_risk}
    \theta^* = \Theta(\psi^*,P) = \min_{\psi \in \Psi} \Theta(\psi,P) = \min_{\psi \in \Psi} \int L(\psi,o) dP(o).
\end{equation}

For instance, if $L(\psi,O) = \bi(T^* \wedge \tau - \psi(Z)\bi)^2$ is the quadratic loss function and if \linebreak $P(t,z) = \Prob(T^*\leq t, Z \leq z)$ is the joint law of $(T^*, Z)$,  then the RMST minimizes this risk, i.e. $\psi^*(Z) = \Esp[T^* \wedge \tau \mid Z]$. 
In practice, the risk minimizer is typically unknown and we aim to approximate it as faithfully as possible. 
Suppose that a dataset of $n$ i.i.d. observations $\{O_i, i=1,\dots,n\}$, $O_i \sim P \in \M$, of empirical law $P_n$, is available to estimate a parameter $\psi(P)$. 
An estimator mapping $\psihat$ can be viewed simply as an algorithm one applies to data, i.e. to empirical distributions $P_n$. 
A realization of this mapping applied to a particular empirical distribution $P_n$ is denoted $\psihat(P_n)$ and belongs to $\D(\mathcal{Z})$.
 
\subsection{Uncensored data} For the sake of simplicity, suppose first that we observe the complete data $\{O_i^* = (T_i^*,Z_i), i=1,\dots,n\}$, and we have access to $K_n$ candidate estimators $\psihat_k$, $k=1,\dots,K_n$, of $\psi^*$. We also call these estimators candidate learners. We then wish to select the best estimator among all the candidate learners, that is the one that minimizes the quadratic loss. 
\citet{van_der_laan_unified_2003}, \citet{dudoit_asymptotics_2005} and \citet{van_der_vaart_oracle_2006} theoretically validated the use of cross-validation to select an optimal learner among many candidates and protect against overfitting.
Cross-validation starts with splitting the data into a training set and a validation set. Candidate estimators are then constructed on the training set and evaluated on the validation set. 
Formally, we divide the data according to an independent random vector $B_n = (B_n(i): i = 1,\dots,n) \in \{0,1\}^n$ into a training set $\{O_i^*: i, B_n(i)=0\}$ of size $n_0 = n - \lfloor n p_n  \rfloor$ and a validation set $\{O_i^*: i, B_n(i)=1\}$ of size $n_1 = \lfloor n p_n  \rfloor$ for $p_n \in (0,1)$. 
Several cross-validation schemes, i.e. distributions for $B_n$, can be used. In the following, we will focus on $V$-fold cross-validation, where data are divided into $V$ subsets, or folds, of approximately same size. One by one, each fold serves as validation set while the remaining folds constitute the training set. The associated distribution of $B_n$ assigns a mass of $1/V$ to each of the resulting $V$ binary vectors.

We suppose that all observations $O_i^*$ are i.i.d. of law $P^* \in \M$, and we denote $\PnBn^0$ and $\PnBn^1$ the empirical distributions of the training and validation sets respectively. The cross-validated risk estimator for the $k$-th candidate learner is then defined as
\begin{align*}
    \thetahat(k) & = \EspBn \Theta\bi(\psihat_k(\PnBn^0),\PnBn^1\bi) = \EspBn \int L\bi(\psihat_k(\PnBn^0),o\bi)d\PnBn^1(o) \\
    & = \EspBn \frac{1}{n_1} \sum_{i: B_n(i) =1} L\bi(\psihat_k(\PnBn^0),O_i^* \bi),
\end{align*}
and the cross-validated selector is denoted
\begin{equation*}
    \khat = \argmin_{k \in \{1,\dots,K_n\}} \thetahat(k).
\end{equation*}
Optimality results are based on the comparison between the cross-validated selector and the selector which, for each given dataset, makes the best choice, knowing the true full data distribution.
This cross-validated oracle selector minimizes the cross-validated conditional risk
\begin{equation}\label{eq::cv_risk}
    \thetatilde(k) = \EspBn \Theta\bi(\psihat_k(\PnBn^0),P^* \bi) = \EspBn \int L\bi(\psihat_k(\PnBn^0),o \bi)dP^*(o),
\end{equation}
and is denoted
\begin{equation}\label{eq::cv_oracle_selector}
    \ktilde = \argmin_{k \in \{1,\dots,K_n\}} \thetatilde(k).
\end{equation}
In~\cite{dudoit_asymptotics_2005}, it is proven that the cross-validation selector performs asymptotically as well as the oracle cross-validation selector in terms of performance measure such as the MSE for regression or the AUC for classification tasks. The result for the MSE in the setting with uncensored data is stated in Theorem~\ref{thm::CV_uncensored} below.
\begin{theorem} \label{thm::CV_uncensored}
    Let $O_1^*, \dots, O_n^*$ be a random sample from a data generating distribution $P^*$, where each $O_i^* = (T_i^*,Z_i)$ consists of two components, a univariate outcome $T_i^* \in \Rbb^+$ and a $d$-dimensional covariate vector $Z_i \in \Rbb^d$. Let $\{\psihat_k : k = 1,\dots,K_n\}$ denote a sequence of $K_n$ candidate estimators for the RMST, $\psi^*(Z) = \Esp[T^* \wedge \tau \mid Z]$, which is the risk minimizer for the quadratic loss function $L(\psi,O^*) = \bi(T^* \wedge \tau - \psi(Z)\bi)^2$. Suppose there exists $M$ such that $\tau \leq M < \infty$ and
    \begin{equation*}
         \sup_{Z \in \mathcal{Z},\psi \in \Psi} \lvert \psi(Z) \rvert \leq M \text{ almost surely.}
    \end{equation*}
    \begin{description}
    \item \textbf{Finite sample result.} Let $M_1 = 8 M^2$, $M_2 = 16 M^2$ and $c(M, \gamma) = 2(1+\gamma)^2 (M_1/3 + M_2/\gamma)$. For all $\gamma > 0$, 
    \begin{equation*}
        0 \leq \Esp [\thetatilde(\khat) - \theta^*] \leq (1+2 \gamma) \Esp[ \thetatilde(\ktilde) - \theta^*] + 2 c(M,\gamma) \frac{1 + \log(K_n)}{n p_n}\cdot
    \end{equation*}
    \item \textbf{Asymptotic results.} If 
    \[
        \frac{\log(K_n)}{n p_n \Esp[ \thetatilde(\ktilde) - \theta^*]} \to 0 \text{ as } n \to \infty,
    \]
    then
    \begin{equation*}
        \frac{\Esp[ \thetatilde(\khat) - \theta^*]}{\Esp[ \thetatilde(\ktilde) - \theta^*]} \to 1 \text{ as } n \to \infty.
    \end{equation*}
    Similarly, if 
    \[
        \frac{\log(K_n)}{n p_n (\thetatilde(\ktilde) - \theta^*)} \to 0 \text{ in probability as } n \to \infty,
    \]
    then
    \begin{equation*}
        \frac{\thetatilde(\khat) - \theta^*}{\thetatilde(\ktilde) - \theta^*} \to 1 \text{ in probability as } n \to \infty.
    \end{equation*}
    \end{description}
\end{theorem}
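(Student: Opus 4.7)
The plan is to follow the standard oracle inequality blueprint for cross-validated model selectors. First, I would set $d(k) = \thetatilde(k) - \theta^*$ and use the defining property $\khat = \argmin_k \thetahat(k)$, which gives $\thetahat(\khat) \leq \thetahat(\ktilde)$ and hence the purely deterministic decomposition
\[
d(\khat) \leq \bi[\thetatilde(\khat) - \thetahat(\khat)\bi] + \bi[\thetahat(\ktilde) - \thetatilde(\ktilde)\bi] + d(\ktilde).
\]
Taking expectation, the task reduces to bounding the two centered fluctuation terms uniformly in $k$ by a fraction $\gamma$ of $\Esp[d(k)]$ plus an additive remainder of order $(1 + \log K_n)/(n p_n)$.

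The key structural input is a variance-mean bound for the quadratic loss under the uniform boundedness hypothesis. From the algebraic identity
\[
L(\psi, O^*) - L(\psi^*, O^*) = \bi(\psi^*(Z) - \psi(Z)\bi)\bi(2(T^* \wedge \tau) - \psi(Z) - \psi^*(Z)\bi)
\]
and the fact that $\psi^*(Z) = \Esp[T^* \wedge \tau \mid Z]$ is the risk minimizer, conditioning on $Z$ yields
\[
\Esp\bi[L(\psi, O^*) - L(\psi^*, O^*)\bi] = \Esp\bi[(\psi^*(Z) - \psi(Z))^2\bi].
\]
Since $|\psi|$, $|\psi^*|$ and $T^* \wedge \tau$ are all bounded by $M$, the second factor in the identity is bounded by $4M$ in absolute value, which gives the almost-sure envelope $|L(\psi, O^*) - L(\psi^*, O^*)| \leq M_1 = 8M^2$ and
\[
\Esp\bi[(L(\psi, O^*) - L(\psi^*, O^*))^2\bi] \leq M_2\, \Esp\bi[L(\psi, O^*) - L(\psi^*, O^*)\bi],
\]
with $M_2 = 16M^2$. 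This variance-mean coupling is the mechanism that allows Bernstein's inequality to deliver fast-rate control rather than the $1/\sqrt{n}$ rate of Hoeffding.

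Conditional on the training sample and on $B_n$, the deviation $\thetahat(k) - \thetatilde(k)$ is the average of $n_1 = n p_n$ independent, bounded, centered random variables with variance controlled by $M_2\, d(k)$. Applying Bernstein's inequality for each $k$, a union bound over $k \in \{1, \dots, K_n\}$, and then integrating the tail via the elementary inequality $2ab \leq \gamma a^2 + b^2/\gamma$ (used to split Bernstein's $\sqrt{\cdot}$ contribution into a variance part and a deterministic part) delivers, for every $\gamma > 0$,
\[
\Esp\bi|\thetahat(k) - \thetatilde(k)\bi| \leq \gamma\, \Esp[d(k)] + c(M,\gamma)\frac{1 + \log K_n}{n p_n}
\]
uniformly in $k$, which I apply at both $k = \khat$ and $k = \ktilde$. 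Substituting into the decomposition and isolating $\Esp[d(\khat)]$ on the left produces exactly the claimed finite sample inequality. Both asymptotic statements then follow mechanically: divide the finite sample bound by $\Esp[d(\ktilde)]$ and choose a sequence $\gamma_n \to 0$ slow enough that $c(M,\gamma_n)(1 + \log K_n)/(n p_n) = o(\Esp[d(\ktilde)])$, which is precisely the stated rate assumption; the in-probability version is analogous, replacing integrated tails by high-probability Bernstein deviations.

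The hard part will be the uniform-in-$k$ deviation control: obtaining the bound in exactly the right form, with coefficient $(1 + 2\gamma)$ in front of $\Esp[d(\ktilde)]$ and a clean additive remainder $2 c(M,\gamma)(1+\log K_n)/(n p_n)$. The subtlety is that the variance term $M_2\, d(k)$ inside Bernstein's tail depends on $k$ and on the random training sample, so to convert the concentration into a useful expectation bound one must apply Young's inequality to decouple the $\sqrt{d(k)}$ contribution from $d(k)$ itself, then carefully track how the constants $M_1$, $M_2$ and the $\gamma$-dependence propagate through to give the explicit constant $c(M,\gamma) = 2(1+\gamma)^2 (M_1/3 + M_2/\gamma)$. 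Once this deviation lemma is in place, assembling the finite sample result and deducing the two asymptotic statements is essentially bookkeeping.
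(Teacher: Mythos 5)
Your blueprint coincides with the argument the paper relies on (Theorem~\ref{thm::CV_uncensored} is quoted from Dudoit and van der Laan, 2005, and the same machinery is spelled out in the paper's proof of Theorem~\ref{thm::CV_censored_pobs}): the selector inequality $\thetahat(\khat)\le\thetahat(\ktilde)$, the factorization $L(\psi,O^*)-L(\psi^*,O^*)=\bi(\psi^*(Z)-\psi(Z)\bi)\bi(2(T^*\wedge\tau)-\psi(Z)-\psi^*(Z)\bi)$ giving the envelope $M_1=8M^2$ and the variance--mean coupling with $M_2=16M^2$, Bernstein's inequality conditionally on the training data and the split vector, a union bound over the $K_n$ candidates to handle the data-dependent index, and integration of the tail to produce the $(1+\log K_n)/(np_n)$ remainder. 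All of these ingredients are stated correctly in your proposal.

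The step that does not deliver the theorem exactly as stated is the final assembly. If you bound each fluctuation term in expectation by $\gamma\,\Esp[\thetatilde(k)-\theta^*]+c(M,\gamma)(1+\log K_n)/(np_n)$ at $k=\khat$ and $k=\ktilde$ and then ``isolate $\Esp[\thetatilde(\khat)-\theta^*]$ on the left'', you get
\begin{equation*}
\Esp[\thetatilde(\khat)-\theta^*]\;\le\;\frac{1+\gamma}{1-\gamma}\,\Esp[\thetatilde(\ktilde)-\theta^*]\;+\;\frac{2\,c(M,\gamma)}{1-\gamma}\,\frac{1+\log K_n}{np_n},
\end{equation*}
which requires $\gamma<1$ and has constants $\tfrac{1+\gamma}{1-\gamma}\ge 1+2\gamma$ and $2c(M,\gamma)/(1-\gamma)$, so it is not ``exactly the claimed finite sample inequality'' for all $\gamma>0$ (only an equivalent statement after reparametrizing $\gamma$, which changes $c$). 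The paper's route avoids any rearrangement: the weights are inserted into the decomposition itself, giving $\thetatilde(\khat)-\theta^*\le(1+2\gamma)\bi(\thetatilde(\ktilde)-\theta^*\bi)+R_{\khat,n}+T_{\ktilde,n}$ with $R_{k,n}(B_n)=(1+\gamma)\bi(\Htilde{k}-\Hhat{k}\bi)-\gamma\Htilde{k}$ and $T_{k,n}(B_n)=(1+\gamma)\bi(\Hhat{k}-\Htilde{k}\bi)-\gamma\Htilde{k}$, and Bernstein plus the union bound are applied directly to $\Prob\bi(R_{\khat,n}(B_n)>s\bi)$; the subtracted $\gamma\Htilde{k}$, combined with $\sigma_k^2\le M_2\Htilde{k}$, is precisely what makes the exponent linear in $s$ and yields $\Esp[R_{\khat,n}]\le c(M,\gamma)(1+\log K_n)/(np_n)$, and similarly for $T_{\ktilde,n}$. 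Note also that the control at the random index $\khat$ must come from this union-bound tail argument, not from a per-$k$ expectation bound transplanted to $k=\khat$. With that modification your argument gives the finite-sample bound with the stated constants; the asymptotic claims then follow as you describe (your weaker constants would in any case suffice for them), the in-probability version being obtained in the paper by invoking Lemma~2 of Dudoit and van der Laan rather than by rerunning high-probability deviation bounds.
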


Theoretical guarantees for the discrete super learner algorithm, which selects the learner minimizing the cross-validated risk among all candidates, are outlined in Theorem~\ref{thm::CV_uncensored}.
However, instead of simply selecting one candidate estimator in the library, it is also possible to consider the continuous super learner algorithm which fits a weighted combination of the candidate learners. During the $V$-fold cross-validation process, a new data matrix is created, where each row $i$ consists in the set of predictions obtained from $Z_i$ by every candidate learners, together with the true outcome $T^*_i\wedge \tau$. Outcomes are then regressed onto the predictions, using another algorithm chosen by the user. This puts weights on the candidate learners. 
Such a continuous super learner will perform asymptotically at least as well as the best candidate learner. This is a direct consequence of Theorem~\ref{thm::CV_uncensored} by considering any combination of the candidate learners as a candidate learner itself, see~\citet{van_der_laan_super_2007}.

\subsection{Right-censored data}\label{sec::super_learner_RC}

\subsubsection{Super learner on standard pseudo-observations} 

\begin{figure}[ht!]
    \centering
    \includegraphics[scale=0.6]{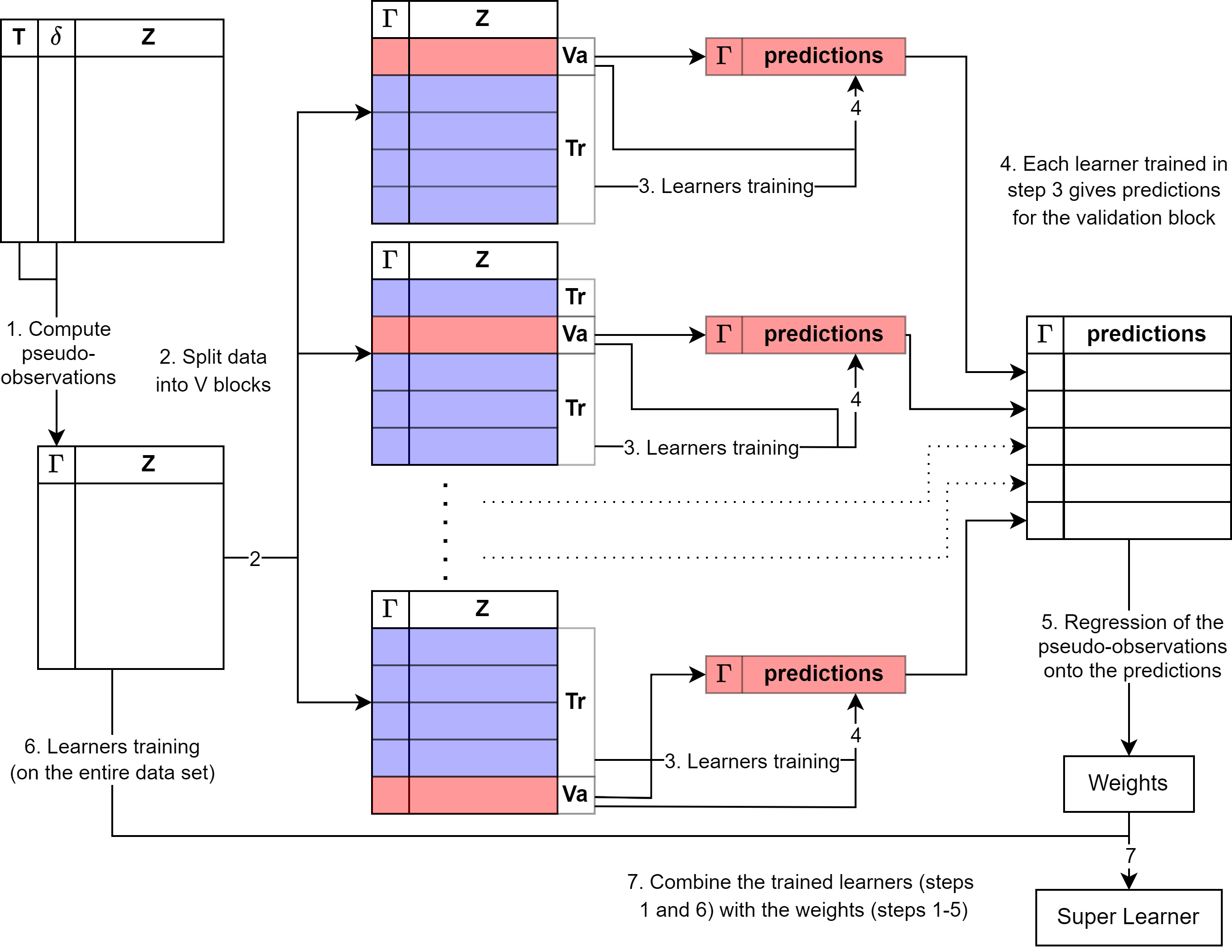}
    \caption{Diagram of the super learner based on standard pseudo-observations for right-censored data, see Equation~\eqref{eq::pobs}. Pseudo-observations are computed once and for all at the beginning.}
    \label{fig::pobs_SL}
\end{figure}

In order to take into account right-censored data, and motivated by the asymptotic result in Equation~\eqref{eq::pobs_jacobsen}, we propose a new method that consists in feeding pseudo-observations directly in the super learner, using the quadratic loss applied to those pseudo-observations. Our algorithm is therefore identical to the super learner described in~\citet{van_der_laan_super_2007}, with an additional first step for computing pseudo-observations. A diagram illustrating the method is provided in Figure~\ref{fig::pobs_SL}. The corresponding algorithm is detailed in Algorithm~\ref{algo::pobs_SL}. It describes both the discrete and the continuous pseudo-observations-based super learners, which differ from step 5 onwards.

\begin{algorithm}[H] 
\begin{algorithmic}
\REQUIRE Data $\Dn = \{O_i = (T_i,\delta_i,Z_i): i = 1,\dots,n\}$, number of folds $V \geq 2$, library of candidate algorithms for estimating the RMST based on pseudo-observations $\{\psihat_k : k = 1,\dots,K_n\}$.
\ENSURE A trained algorithm for the prediction of the restricted time to event.
\STATE \textbf{1.} Compute standard pseudo-observations for the whole dataset: $\{\Gamma_i: i=1,\dots,n\}$ (see Equation~\eqref{eq::pobs}).
\STATE \textbf{2.} Split the dataset into $V$ mutually exclusive blocks. 
\FOR{$v \in \{1, \dots, V\}$}
\STATE \textbf{3.} Divide observations according to the vector $B_n$ verifying, for all $i = 1,\dots,n$,
\[
    B_n(i) = \left\{
    \begin{array}{ll}
        1 & \text{if the $i$-th observation belongs to the $v$-th fold,} \\
        0 & \text{else,}
    \end{array}
    \right.
    \]
such that $\{(\Gamma_i,Z_i):  B_n(i)=0\}$ is the training set, of empirical law $\PnBn^0$, and $\{(\Gamma_i,Z_i):  B_n(i)=1\}$ is the validation set, of empirical law $\PnBn^1$.
Train all candidate learners $\{\psihat_k : k = 1,\dots,K_n\}$ on the training set, resulting in the trained predictors $\{\psihat_k(\PnBn^0) : k = 1,\dots,K_n\}$. 
\STATE \textbf{4.} Predict the restricted time to event for the data in the validation set with each trained candidate learner: $\{\psihat_{k,i} = \psihat_{k}(\PnBn^0)(Z_i): k = 1,\dots,K_n, B_n(i)=1\}$.
\ENDFOR
\STATE \textbf{Discrete super learner}
\STATE \textbf{5.} Identify the discrete super learner that minimizes the cross-validated risk:  $\khat = \argmin_{k \in \{1,\dots,K_n\}} \frac{1}{n}\sum_{i=1}^n (\Gamma_i - \psihat_{k,i})^2$.
\STATE \textbf{6.} Train the discrete super learner $\psihat_{\khat}$ on the entire dataset $\{(\Gamma_i,Z_i): i = 1,\dots,n\}$.
\STATE \textbf{Continuous super learner}
\STATE \textbf{5.} Regress the pseudo-observations $\{\Gamma_i: i = 1,\dots,n\}$ onto the predictions $\{\psihat_{k,i}: k = 1,\dots,K_n, i = 1,\dots,n\}$ with a parametric regression model in order to assign weights to the candidate learners.
\STATE \textbf{6.} Train all candidate learners $\{\psihat_k : k = 1,\dots,K_n\}$ on the entire dataset $\{(\Gamma_i,Z_i): i = 1,\dots,n\}$.
\STATE \textbf{7.} Combine the candidate learners trained on the whole dataset at step 6 with the weights obtained at step 5 to form the continuous super learner.
\end{algorithmic}
\caption{Pseudo-observations-based Super Learner}
\label{algo::pobs_SL}
\end{algorithm}

\subsubsection{Super learner on split pseudo-observations} 

We also propose a second algorithm based on split pseudo-observations. It requires to use a subset of the data, that we call the Kaplan-Meier (or KM) set, to compute the pseudo-observations in the validation set. A diagram illustrating this second method is provided in Figure~\ref{fig::pobs_split_SL}. The algorithm is detailed in Algorithm~\ref{algo::pobs_split_SL}, for both discrete and continuous super learners. The interest in this second algorithm lies in the conditional independence structure of the split pseudo-observations which allows to derive theoretical results similar to Theorem~\ref{thm::CV_uncensored}. Thus, the theoretical results below are provided with regard to the algorithm based on split pseudo-observations.

\begin{algorithm}[H] 
\begin{algorithmic}
\REQUIRE Data $\Dn = \{O_i = (T_i,\delta_i,Z_i): i = 1,\dots,n\}$, number of folds $V \geq 3$, library of candidate algorithms for estimating the RMST based on right-censored data $\{\psihat_k : k = 1,\dots,K_n\}$.
\ENSURE A trained algorithm for the prediction of the restricted time to event.
\STATE \textbf{1.} Split the dataset into $V$ mutually exclusive blocks.
\FOR{$v \in \{1, \dots, V\}$}
\STATE \textbf{2.} Divide observations according to the vector $B_n$ verifying, for all $i = 1,\dots,n$,
\[
    B_n(i) = \left\{
    \begin{array}{ll}
        2 & \text{if the $i$-th observation belongs to the $v$-th fold,} \\
        1 & \text{if the $i$-th observation belongs to the $(v+1)$-th fold (or first fold if $v=V$),} \\
        0 & \text{else,}        
    \end{array}
    \right.
    \]
such that $\{O_i:  B_n(i)=0\}$ is the training set, of empirical law $\PnBn^0$, $\{O_i:  B_n(i)=1\}$ is the KM set, of empirical law $\PnBn^1$, and $\{O_i:  B_n(i)=2\}$ is the validation set, of empirical law $\PnBn^2$.
Compute the split pseudo-observations for the data in the validation set, based on the KM set: $\{\Gamma_i^S = \Gamma_i(\PnBn^1): B_n(i)=2\}$ (see Equation~\eqref{eq::split_pobs}).
\STATE \textbf{3.} Train all candidate learners $\{\psihat_k : k = 1,\dots,K_n\}$ on the training set, resulting in the trained predictors $\{\psihat_k(\PnBn^0) : k = 1,\dots,K_n\}$.
\STATE \textbf{4.} Predict the time to event for the data in the validation set with each trained candidate learner: $\{\psihat_{k,i}=\psihat_{k}(\PnBn^0)(Z_i): k = 1,\dots,K_n, B_n(i)=1\}$.
\ENDFOR
\STATE \textbf{Discrete super learner}
\STATE \textbf{5.} Identify the discrete super learner that minimizes the cross-validated risk:  $\khat = \argmin_{k \in \{1,\dots,K_n\}} \frac{1}{n}\sum_{i=1}^n (\Gamma_i^S - \psihat_{k,i})^2$.
\STATE \textbf{6.} Train the discrete super learner $\psihat_{\khat}$ on the entire dataset $\{O_i: i = 1,\dots,n\}$.
\STATE \textbf{Continuous super learner}
\STATE \textbf{5.} Regress the split pseudo-observations $\{\Gamma_i^S: i = 1,\dots,n\}$ onto the predictions $\{\psihat_{k,i}: k = 1,\dots,K_n, i = 1,\dots,n\}$ with a parametric regression model in order to assign weights to the candidate learners.
\STATE \textbf{6.} Train all candidate learners $\{\psihat_k : k = 1,\dots,K_n\}$ on the entire dataset $\{O_i: i = 1,\dots,n\}$.
\STATE \textbf{7.} Combine the candidate learners trained on the whole dataset at step 6 with the weights obtained at step 5 to form the continuous super learner.
\end{algorithmic}
\caption{Split Pseudo-observations-based Super Learner}
\label{algo::pobs_split_SL}
\end{algorithm}

\begin{figure}
    \centering
    \includegraphics[scale=0.6]{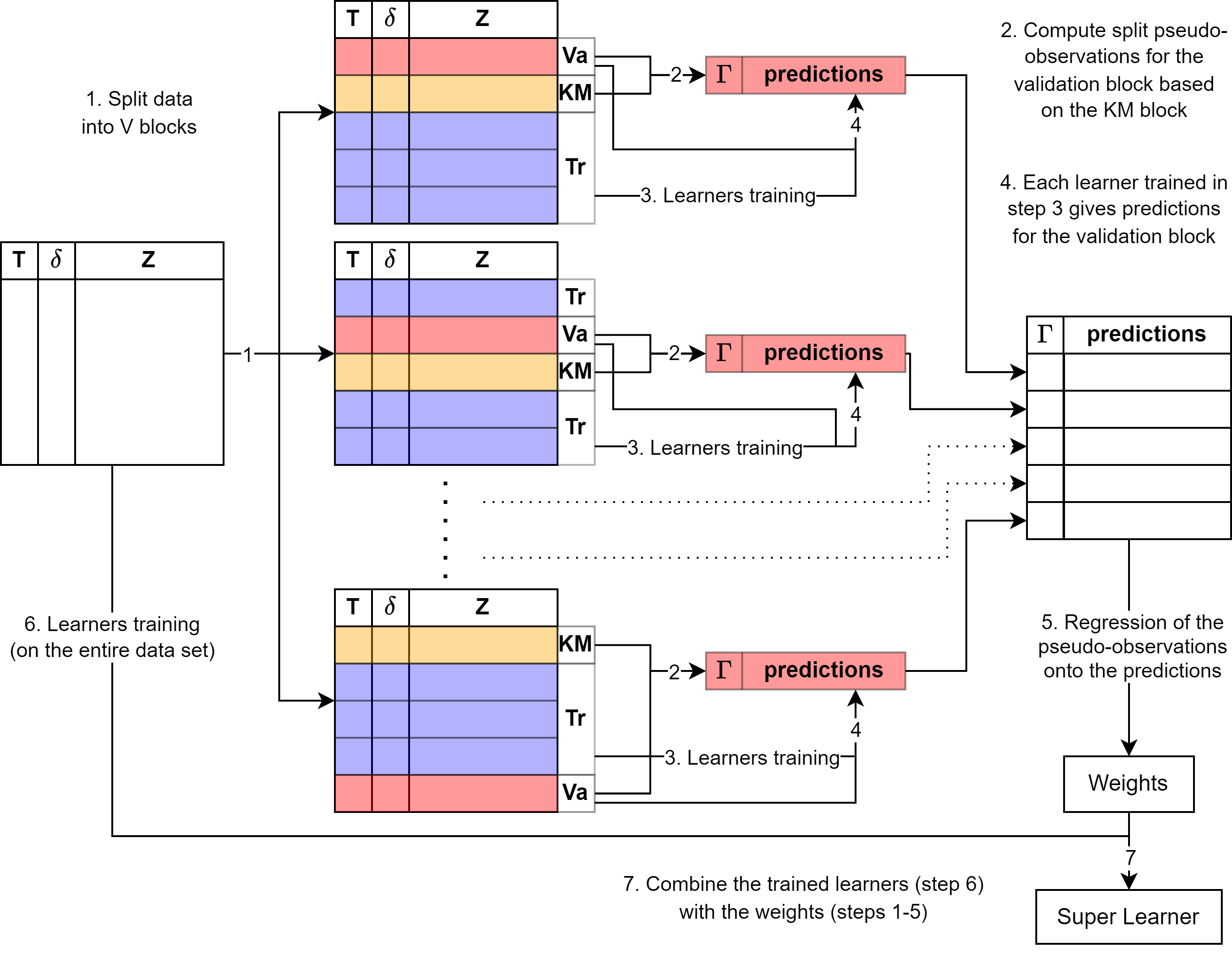}
    \caption{Diagram of the super learner based on split pseudo-observations for right-censored data, see Equation~\eqref{eq::split_pobs}. Pseudo-observations are computed during the cross-validation step, for each validation block, based on an additional subset of the data (KM).}
    \label{fig::pobs_split_SL}
\end{figure}

Formally, consider the censored observations $\{O_i = (T_i,\delta_i,Z_i), i=1,\dots,n\}$, i.i.d. of joint law $P \in \M$, with $T_i \in \Rbb^+$, $\delta_i \in \{0,1\}$, $Z_i \in \Rbb^d$.
As previously, $K_n$ candidate estimator mappings $\psihat_k$, $k=1,\dots,K_n$ are considered, among which we wish to select the best in terms of quadratic error, using cross-validation.
Combining split pseudo-observations with cross-validation imposes to divide the data in three subsets instead of two. Observations are divided according to an independent random vector $B_n = (B_n(i): i = 1,\dots,n) \in \{0,1,2\}^n$ into a first training set $\{O_i: i, B_n(i)=0\}$ of size $n_0 = n - \lfloor n p_{1,n}  \rfloor - \lfloor n p_{2,n}  \rfloor$ for $p_{1,n}, p_{2,n}, p_{1,n}+p_{2,n} \in (0,1)$, a second training set $\{O_i: i, B_n(i)=1\}$ of size $n_1 = \lfloor n p_{1,n}  \rfloor$ and a validation set $\{O_i: i, B_n(i)=2\}$ of size $n_2 = \lfloor n p_{2,n}  \rfloor$. The first training set is used to compute the candidate estimators of the RMST. The second training set is used to compute the Kaplan-Meier estimator which in turn is used for the computation of the pseudo-observations. We refer to this set as the Kaplan-Meier (KM) set. Pseudo-observations are computed for the data in the validation set. We denote $\PnBn^0$, $\PnBn^1$ and $\PnBn^2$ the empirical distributions of the three subsets. In this context, Equation~\eqref{eq::pobs_split_cond_mean_def} can be rewritten as
\begin{equation}\label{eq::pobs_split_cond_mean}
    \Esp[\Gamma_O(\PnBn^1) \mid Z,\PnBn^1,B_n] = \Esp[T^* \wedge \tau \mid Z] + \Esp[\xin \mid Z, \PnBn^1,B_n],
\end{equation}
where $\Gamma_O(\PnBn^1)$ is the split pseudo-observation constructed for the observation $O$ based on the distribution $\PnBn^1$ of the KM set.
It turns out that $\Esp[\xin \mid Z, \PnBn^1,B_n] = o_\Prob(1)$, so that the pseudo-observation plays a role of substitute to the true event time in the loss function. Consequently, the loss function also depends on the KM set used to compute the pseudo-observation,
\[
    L\po : (\psi, \PnBn^1, O) \mapsto L\bi(\psi,(\Gamma_O(\PnBn^1),Z)\bi) \in \Rbb, \text{ for } \psi \in \Psi, \PnBn^1 \sim \Pni, O \sim P.
\]

As above, we can define the risk of the parameter $\psi$ for distributions $P$ and $\PnBn^1$,
\[
    \Theta\po(\psi,\PnBn^1,P) = \EspBn \int L\po(\psi,\PnBn^1,o) dP(o).
\]
The risk minimizer now also depends on 
the empirical distribution $\PnBn^1$,
\begin{equation*}
    \psi^*_1 = \psi^*_1(\PnBn^1,P) = \argmin_{\psi \in \Psi} \Theta\po(\psi,\PnBn^1,P) = \argmin_{\psi \in \Psi} \EspBn \int L\po(\psi, \PnBn^1, o)dP(o),
\end{equation*}
and the corresponding optimal risk can be written as
\begin{equation}\label{eq::optimal_risk_pobs}
    \theta^*_1 = \Theta\po(\psi^*_1,\PnBn^1,P) = \min_{\psi \in \Psi} \Theta\po(\psi,\PnBn^1,P) = \min_{\psi \in \Psi} \EspBn \int L\po(\psi,\PnBn^1,o) dP(o).
\end{equation}
We can rewrite the cross-validated risk estimator,
\begin{align*}
    \thetahatpo(k) & =  \Theta\po\bi(\psihat_k(\PnBn^0),\PnBn^1,\PnBn^2\bi) \\
    & = \EspBn \int L\po\bi(\psihat_k(\PnBn^0),\PnBn^1,o\bi) d\PnBn^2(o) \\
    & = \EspBn \frac{1}{n_2} \sum_{i: B_n(i) =2} L\po\bi(\psihat_k(\PnBn^0),\PnBn^1,O_i\bi),
\end{align*}
the cross-validated selector,
\begin{equation}\label{eq::cv_selector_pobs}
    \khat\po = \argmin_{k \in \{1,\dots,K_n\}} \thetahatpo(k),
\end{equation}
the cross-validated conditional risk,
\begin{equation*}
    \thetatildepo(k) =  \Theta\po\bi(\psihat_k(\PnBn^0),\PnBn^1,P\bi) = \EspBn \int L\po\bi(\psihat_k(\PnBn^0),\PnBn^1,o\bi)dP(o),
\end{equation*}
and the cross-validated oracle selector
\begin{equation*}
    \ktilde\po = \argmin_{k \in \{1,\dots,K_n\}} \thetatildepo(k).
\end{equation*}
We compare the risk differences $\thetatildepo(\khat\po) - \theta^*_1$ and $\thetatildepo(\ktilde\po) - \theta^*_1$ to demonstrate optimality in a similar manner as in Theorem~\ref{thm::CV_uncensored}. This result is stated in Theorem~\ref{thm::CV_censored_pobs} below.

\begin{theorem} \label{thm::CV_censored_pobs}
    Let $O_1, \dots, O_n$ be a random sample from a data generating distribution $P$, where each $O_i = (T_i,\delta_i,Z_i)$ consists of three components, a univariate outcome $T_i \in \Rbb^+$, a binary censoring indicator $\delta_i \in \{0,1\}$ and a $d$-dimensional covariate vector $Z_i \in \Rbb^d$. Let $\tau < \tau_H$. Let $\{\psihat_k : k = 1,\dots,K_n\}$ denote a sequence of $K_n$ candidate estimators for the RMST, $\Esp[T^* \wedge \tau \mid Z]$, which is the risk minimizer for the quadratic loss function $L\po(\psi,\PnBn^1,O) = \bi(\Gamma_O(\PnBn^1) - \psi(Z)\bi)^2$, up to an asymptotically negligible term (see Equation~\eqref{eq::pobs_split_cond_mean}).
    Suppose there exists M such that $\tau \leq M < \infty$, and
    \begin{equation}\label{eq::assumption_thm_2}
        \lvert \Gamma_O(\PnBn^1) \rvert \leq M \text{ and } \sup_{Z \in \mathcal{Z},\psi \in \Psi} \lvert \psi(Z) \rvert \leq M \text{ almost surely.}
    \end{equation}
    Suppose that Assumption~\ref{ass::indep_cens} holds. 
\begin{description}
\item    \textbf{Finite sample result.} Let $M_1 = 8 M^2$, $M_2 = 16 M^2$ and $c(M, \gamma) = 2(1+\gamma)^2 (M_1/3 + M_2/\gamma)$. For all $\gamma > 0$, 
    \begin{equation}\label{eq::fin_sam_pobs}
        0 \leq \Esp [\thetatildepo(\khat\po) - \theta^*_1] \leq (1+2 \gamma) \Esp[ \thetatildepo(\ktilde\po) - \theta^*_1] + 2 c(M,\gamma) \frac{1 + \log(K_n)}{n p_{2,n}}\cdot
    \end{equation}
   \item \textbf{Asymptotic results.} If 
    \[
        \frac{\log(K_n)}{n p_{2,n} \Esp[ \thetatildepo(\ktilde\po) - \theta^*_1]} \to 0 \text{ as } n \to \infty, 
    \]
    then 
    \begin{equation}\label{eq::thm2_asym1}
        \frac{\Esp[ \thetatildepo(\khat\po) - \theta^*_1]}{\Esp[ \thetatildepo(\ktilde\po) - \theta^*_1]} \to 1 \text{ as } n \to \infty.
    \end{equation}
    \newpage
    Similarly, if 
    \[
        \frac{\log(K_n)}{n p_{2,n} (\thetatildepo(\ktilde\po) - \theta^*_1)} \to 0 \text{ in probability as } n \to \infty,
    \]
    then
    \begin{equation}\label{eq::thm2_asym2}
        \frac{\thetatildepo(\khat\po) - \theta^*_1}{\thetatildepo(\ktilde\po) - \theta^*_1} \to 1 \text{ in probability as } n \to \infty.
    \end{equation}
    \end{description}
\end{theorem}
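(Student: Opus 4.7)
My plan is to show that the proof of Theorem~\ref{thm::CV_uncensored} of Dudoit and van der Laan transfers to the present setting essentially verbatim, once one conditions on the additional source of randomness introduced by the Kaplan--Meier subsample. The \emph{structural reason} why split pseudo-observations permit such a transfer, while classical pseudo-observations do not, is that given $(\PnBn^0,\PnBn^1,B_n)$, the validation points $\{O_i : B_n(i)=2\}$ are i.i.d.\ of law $P$ and independent of $(\PnBn^0,\PnBn^1)$; hence the split pseudo-observations $\{(\Gamma_i(\PnBn^1),Z_i) : B_n(i)=2\}$ form a conditionally i.i.d.\ sample, which is exactly the structure required by the concentration arguments underlying Theorem~\ref{thm::CV_uncensored}.

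First I would introduce, for every candidate $k$, the centered loss increment
\[
    H_k(O) \defeq L\po\bi(\psihat_k(\PnBn^0),\PnBn^1,O\bi) - L\po\bi(\psi^*_1,\PnBn^1,O\bi).
\]
Using the identity $(a-c)^2-(a-b)^2=(c-b)(c+b-2a)$ together with Equation~\eqref{eq::assumption_thm_2}, I would verify the two standard properties of the quadratic loss: the uniform bound $|H_k(O)|\leq M_1 = 8M^2$, and the variance-to-mean inequality
\[
    \Var\bi(H_k(O)\midi\PnBn^0,\PnBn^1,B_n\bi) \leq M_2\,\Esp\bi[H_k(O)\midi\PnBn^0,\PnBn^1,B_n\bi],
\]
with $M_2 = 16 M^2$. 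The latter uses that $\psi^*_1$ minimizes $\Theta\po(\cdot,\PnBn^1,P)$ by definition, so it coincides with the pointwise regression of $\Gamma_O(\PnBn^1)$ on $Z$ given $\PnBn^1$, whence the first-order term in the expansion of the quadratic increment vanishes in conditional expectation.

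The second step is a Bernstein inequality applied, conditionally on $(\PnBn^0,\PnBn^1,B_n)$, to the sum $\sum_{i:B_n(i)=2} H_k(O_i)$, followed by a union bound over the $K_n$ candidates and integration over $(\PnBn^0,\PnBn^1,B_n)$. Together with the peeling technique of Dudoit and van der Laan (introducing $\gamma$ to bound $\sqrt{ab}$ by $\gamma a + b/\gamma$), this produces
\[
    \Esp\bi[\thetatildepo(\khat\po) - \theta^*_1\bi] \leq (1+2\gamma)\,\Esp\bi[\thetatildepo(\ktilde\po) - \theta^*_1\bi] + 2 c(M,\gamma)\,\frac{1+\log(K_n)}{n p_{2,n}},
\]
which is Equation~\eqref{eq::fin_sam_pobs}; the trivial lower bound follows because $\ktilde\po$ minimizes $\thetatildepo$ by definition. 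The asymptotic conclusions~\eqref{eq::thm2_asym1} and~\eqref{eq::thm2_asym2} then follow by dividing the finite-sample bound by $\Esp[\thetatildepo(\ktilde\po)-\theta^*_1]$ (resp.\ $\thetatildepo(\ktilde\po)-\theta^*_1$), sending $n\to\infty$, and applying Markov's inequality for the in-probability statement.

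The main obstacle is the bookkeeping around the fact that both the target parameter $\psi^*_1$ and the optimal risk $\theta^*_1$ depend on the random Kaplan--Meier sample $\PnBn^1$, whereas in Theorem~\ref{thm::CV_uncensored} they were deterministic population quantities. The crucial verification is that the variance-to-mean inequality above holds uniformly in the realization of $\PnBn^1$; this is the case precisely because $\psi^*_1$ is \emph{defined} as the conditional minimizer of $\Theta\po(\cdot,\PnBn^1,P)$, so the required identity is exact given $\PnBn^1$ and does not require the asymptotic negligibility of the $\xin$ remainder in Equation~\eqref{eq::pobs_split_cond_mean}. Once this is established, the Dudoit--van der Laan peeling argument transfers line by line, with every expectation over the training set replaced by an expectation over $(\PnBn^0,\PnBn^1,B_n)$ and every i.i.d.\ statement replaced by its conditional analogue, with $n_2=\lfloor np_{2,n}\rfloor$ playing the role of the original validation-set size.
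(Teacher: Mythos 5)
Your proposal is correct and follows essentially the same route as the paper's proof: conditioning on $(\PnBn^0,\PnBn^1,B_n)$ to recover a conditionally i.i.d.\ validation sample, establishing that $\psi^*_1$ is the conditional regression of $\Gamma_O(\PnBn^1)$ on $Z$ so that the variance-to-mean bound $\Var(X_k\mid\cdot)\leq M_2\Esp[X_k\mid\cdot]$ holds exactly (the paper's Lemmas~\ref{lemma::risk_minimizer_quad_pobs} and~\ref{lemma::quadratic_var_bound_pobs}), and then running the conditional Bernstein--union-bound argument of Dudoit and van der Laan with $n_2$ in place of the validation-set size. The only nitpick is that the lower bound $0\leq\Esp[\thetatildepo(\khat\po)-\theta^*_1]$ comes from $\theta^*_1$ being the minimal risk over $\Psi$, not from the minimality of $\ktilde\po$.
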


With the additional result from~\citet{jacobsen_note_2016} in Equation~\eqref{eq::pobs_jacobsen}, adapted to split pseudo-observations in Equation~\eqref{eq::pobs_split_cond_mean}, we can also compare the risk differences $\thetatilde(\khat\po)~-~\theta^*$ and $\thetatilde(\ktilde) - \theta^*$.
In other words, we can compare the cross-validated selector on pseudo\hyp{}observations~\eqref{eq::cv_selector_pobs} to the oracle selector on true event times~\eqref{eq::cv_oracle_selector}, in terms of risks on true event times, see~\eqref{eq::optimal_risk} and~\eqref{eq::cv_risk}. The asymptotic result is given as a difference in Corollary~\ref{cor::CV_censored_pobs} below. The result in the form of a ratio requires stronger assumptions and is provided in the Supplementary Material.

\begin{corollary} \label{cor::CV_censored_pobs}
    Same setup and assumptions as in Theorem~\ref{thm::CV_censored_pobs}. 
    \begin{description}
\item    \textbf{Finite sample result.} Let $M_1 = 8 M^2$, $M_2 = 16 M^2$ and $c(M, \gamma) = 2(1+\gamma)^2 (M_1/3 + M_2/\gamma)$. For all $k=1,\dots,K_n$, let 
    \begin{align*}
        \phi_{n_1}(k) & = \EspBn\bii[\Esp\bii[\Esp\bi[\xin \mid Z , \PnBn^1,B_n\bi]^2 \\
        & \quad + 2\bi(\psi^*(Z) - \psihat_{k}(\PnBn^0)(Z)\bi)\Esp\bi[\xin \mid Z , \PnBn^1,B_n\bi] \midi P_{B_n},B_n \bii]\bii].
    \end{align*}
    Then, for all $\gamma > 0$, 
    \begin{align*}
        0 & \leq \Esp[\thetatilde(\khat\po) - \theta^*] \\
        & \leq (1+2 \gamma) \Esp[ \thetatilde(\ktilde\po) - \theta^*] + \Esp\bi[(1+2\gamma) \phi_{n_1}(\ktilde\po) - \phi_{n_1}(\khat\po) \bi] + 2 c(M,\gamma) \frac{1 + \log(K_n)}{n p_{2,n}}\cdot
    \end{align*}
\item    \textbf{Asymptotic results.} If 
    \[
        \frac{\log(K_n)}{n p_{2,n}} \to 0 \text{ as } n \to \infty,
    \]
    then
    \begin{equation}\label{eq::cor_asym1}
        \Esp[ \thetatilde(\khat\po) - \thetatilde(\ktilde) ] \to 0 \text{ as } n \to \infty,
    \end{equation}
    and
    \begin{equation}\label{eq::cor_asym2}
        \thetatilde(\khat\po) - \thetatilde(\ktilde) \to 0 \text{ in probability as } n \to \infty.
    \end{equation}
    \end{description}
\end{corollary}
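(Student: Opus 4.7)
The plan is to reduce Corollary~\ref{cor::CV_censored_pobs} to Theorem~\ref{thm::CV_censored_pobs} via a controlled algebraic comparison of the pseudo-observation risk $\Theta\po$ with the true-event-time risk $\Theta$. Writing $\mu(Z) = \Esp[\Gamma_O(\PnBn^1) \mid Z, \PnBn^1, B_n] = \psi^*(Z) + \xi(Z)$ with $\xi(Z) = \Esp[\xin \mid Z, \PnBn^1, B_n]$ (Equation~\eqref{eq::pobs_split_cond_mean}), the pointwise bias--variance decomposition of the quadratic loss together with the algebraic identity $(\mu - \psi)^2 - (\psi^* - \psi)^2 = 2\xi(\psi^* - \psi) + \xi^2$ yields
\begin{equation*}
    \Theta\po(\psi, \PnBn^1, P) - \Theta(\psi, P^*) = A(\PnBn^1) + \Esp_Z\bi[\xi(Z)^2 + 2\xi(Z)(\psi^*(Z) - \psi(Z))\bi],
\end{equation*}
where $A(\PnBn^1) = \Esp_Z[\Var(\Gamma_O(\PnBn^1) \mid Z, \PnBn^1, B_n) - \Var(T^*\wedge\tau \mid Z)]$ is $\psi$-free. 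Evaluated at the pointwise minimiser $\psi^*_1 = \mu$ this further gives $\theta^*_1 - \theta^* = A(\PnBn^1)$, so the variance contribution cancels in the difference $\Esp[\thetatildepo(k) - \theta^*_1] - \Esp[\thetatilde(k) - \theta^*]$, leaving only the $\xi$-terms that assemble into $\phi_{n_1}(k) = \EspBn\Esp_Z[\xi^2 + 2\xi(\psi^* - \psihat_k(\PnBn^0))]$.

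Taking total expectations and applying Fubini to commute $\EspBn$ with the outer $\Esp$, the cancellation above produces the key identity
\begin{equation*}
    \Esp\bi[\thetatildepo(k) - \theta^*_1\bi] = \Esp\bi[\thetatilde(k) - \theta^*\bi] + \Esp\bi[\phi_{n_1}(k)\bi], \qquad k = 1, \dots, K_n.
\end{equation*}
Substituting this identity at $k = \khat\po$ and $k = \ktilde\po$ into inequality~\eqref{eq::fin_sam_pobs} of Theorem~\ref{thm::CV_censored_pobs} and rearranging immediately delivers the claimed finite-sample bound. The lower bound $\Esp[\thetatilde(\khat\po) - \theta^*] \geq 0$ is immediate from $\theta^* = \min_{\psi \in \Psi}\Theta(\psi, P^*)$.

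For the asymptotic result~\eqref{eq::cor_asym1}, the key step is a uniform-in-$k$ control of $\phi_{n_1}$. Assumption~\eqref{eq::assumption_thm_2} gives $|\psi^*(Z) - \psihat_k(\PnBn^0)(Z)| \leq 2M$ almost surely, so $\sup_k |\phi_{n_1}(k)| \leq \Esp_Z[\xi(Z)^2 + 4M\,|\xi(Z)|]$. Since $\Gamma_O(\PnBn^1)$ and $T^*\wedge\tau$ are bounded, so is $\xin$, and hence $\xin = o_\Prob(1)$ upgrades via dominated convergence to $\Esp[\xin^2] \to 0$ and $\Esp[|\xin|] \to 0$; Jensen's inequality then yields $\Esp[\sup_k |\phi_{n_1}(k)|] \to 0$ as $n_1 \to \infty$. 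Using additionally that $\thetatildepo(\ktilde\po) \leq \thetatildepo(\ktilde)$ by definition of $\ktilde\po$, the identity above produces $\Esp[\thetatilde(\ktilde\po) - \theta^*] \leq \Esp[\thetatilde(\ktilde) - \theta^*] + \Esp[\phi_{n_1}(\ktilde)] - \Esp[\phi_{n_1}(\ktilde\po)]$. Plugging this into the finite-sample bound collapses it to
\begin{equation*}
    \Esp[\thetatilde(\khat\po) - \thetatilde(\ktilde)] \leq 2\gamma\, \Esp[\thetatilde(\ktilde) - \theta^*] + o(1) \qquad \text{as } n \to \infty,
\end{equation*}
for any fixed $\gamma > 0$, the $o(1)$ absorbing the vanishing $\phi_{n_1}$ terms and $2c(M,\gamma)(1+\log K_n)/(n p_{2,n})$ under $\log(K_n)/(n p_{2,n}) \to 0$. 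Since $\Esp[\thetatilde(\ktilde) - \theta^*] \leq 4M^2$ by boundedness, sending $n \to \infty$ first and then $\gamma \downarrow 0$ yields \eqref{eq::cor_asym1}, and \eqref{eq::cor_asym2} follows at once from $\thetatilde(\khat\po) - \thetatilde(\ktilde) \geq 0$ and Markov's inequality. The main subtlety I anticipate is making sure the $\psi$-dependent cross-term from the loss expansion collapses cleanly to $2\xi(Z)(\psi^*(Z) - \psi(Z))$ rather than leaving a residual covariance between $\Gamma_O(\PnBn^1) - T^*\wedge\tau$ and $T^*\wedge\tau$ coupled to $\psi$; routing the computation through $\mu = \psi^* + \xi$ and the conditional Pythagorean identity is exactly what deposits that covariance into the $\psi$-free term $A(\PnBn^1)$, which then cancels with $\theta^*_1 - \theta^*$ in expectation.
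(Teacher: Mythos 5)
Your proposal is correct, and its finite-sample half follows essentially the paper's own route: you derive the same pointwise identity $\thetatildepo(k) - \theta^*_1 = \thetatilde(k) - \theta^* + \phi_{n_1}(k)$ (you organize it as a conditional bias--variance decomposition with a $\psi$-free term $A(\PnBn^1)$, the paper expands the squares directly, but the cancellation and the resulting $\phi_{n_1}$ are identical) and then substitute it into Equation~\eqref{eq::fin_sam_pobs}. Where you genuinely diverge is the asymptotic step. The paper compares $\ktilde\po$ to $\ktilde$ through a separate result, Lemma~\ref{lemma::argmin_lim}, proved by an ordering argument on the perturbed risks $f_k(n)+\varepsilon_k(n)$, and then invokes boundedness to pass to expectations; you instead bypass that lemma entirely by using the defining property $\thetatildepo(\ktilde\po) \leq \thetatildepo(\ktilde)$ together with the identity, which gives $\Esp[\thetatilde(\ktilde\po)-\theta^*] \leq \Esp[\thetatilde(\ktilde)-\theta^*] + \Esp[\phi_{n_1}(\ktilde)-\phi_{n_1}(\ktilde\po)]$, controlled by your uniform-in-$k$ bound $\sup_k \lvert\phi_{n_1}(k)\rvert$. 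This buys two things: it avoids the delicate probabilistic ordering argument of Lemma~\ref{lemma::argmin_lim} (whose ``with probability one for $n\geq N$'' step is the weakest point of the paper's proof), and it makes explicit the uniformity over the random indices $\khat\po,\ktilde\po$ that the paper only asserts when claiming $\Esp[(1+2\gamma)\phi_{n_1}(\ktilde\po)-\phi_{n_1}(\khat\po)]\to 0$. You also spell out the final limiting step ($n\to\infty$ first, then $\gamma\downarrow 0$, using boundedness of $\Esp[\thetatilde(\ktilde)-\theta^*]$ and nonnegativity of $\thetatilde(\khat\po)-\thetatilde(\ktilde)$, with Markov replacing the citation of Lemma~2 of Dudoit and van der Laan), which the paper leaves implicit. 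The only imprecision is your passing claim that $\xin$ itself is bounded; what is needed and available under Assumption~\eqref{eq::assumption_thm_2} is boundedness of the conditional term $\Esp[\xin \mid Z, \PnBn^1, B_n]$ (by $2M$), and your argument goes through unchanged with that correction, at the same level of rigor as the paper's own treatment of the $o_\Prob(1)$ remainder.
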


\section{Simulations} \label{sec::simulations}

In this section, we conduct simulation studies to assess the validity of our approach on finite sample sizes. Our aim is threefold: first, we want to compare the values of the pseudo-observations constructed from the standard procedure and from our new procedure, the split pseudo-observations. Second, we want to compare the performance of our two proposed super learner algorithms, as described in Section~\ref{sec::super_learner_RC}, the one constructed from the standard pseudo-observations and the one constructed from the split pseudo-observations. Finally, we want to assess and compare the performances of our super learner algorithm based on standard pseudo-observations and two competitors: the Cox model and the Random Survival Forests (RSF) model~\citep[see][]{ishwaran_random_2008}. The simulations are implemented from two different Cox based models with increasing complexity. The two models were originally introduced in~\cite{cwiling_comprehensive_2023}. 

\begin{description}
    \item[Scheme 1:] Event times are simulated according to a Cox model with Weibull baseline hazard $\mathcal{W}(\nu,\kappa)$ and three covariates $Z = (Z^1,Z^2,Z^3)^{\top}$, where $Z^k \sim U[-a,a]$ for $k = 1,2,3$. Note that the survival function can be expressed as
    \[
        S(t\mid Z) = \exp\left[-\left(\frac{t}{\kappa}\right)^\nu\exp(\beta^{\top} Z)\right],
    \]
    with Cox regression parameters $\beta = (\beta_1,\beta_2,\beta_3)^{\top}$.
    Parameters are set to $\kappa = 2$, $\nu = 6$, $a = 5$, $(\beta_1, \beta_2, \beta_3) = (2,1,0)$. Censoring is simulated independently according to an exponential law with parameter $\lambda = 0.3$, leading to $47 \%$ censored data. The time horizon $\tau$ is chosen as the $90$th percentile of the observed times $T$ which corresponds to $\tau=3.6$ in this setting.
    \item[Scheme 2:] Event times are simulated according to a Cox model with Weibull baseline hazard $\mathcal{W}(\nu,\kappa)$, $\kappa = 2$, $\nu = 6$, this time with $\lambda(t\mid Z)=\lambda_0(t)\exp(g(Z))$ and 
    \begin{align*}
        g(Z) & =  Z^3 - 3 Z^5 + 2 Z^1 Z^{10} + 4 Z^2 Z^7 + 3 Z^4 Z^5 - 5 Z^6 Z^{10} \\
        & \quad+ 3 Z^8 Z^9 + Z^1 Z^4 - 2 Z^6 Z^9 - 4 Z^3 Z^4 - Z^7 Z^8,
    \end{align*}
    so that the survival function is expressed as
    \begin{align*}
        S(t\mid Z) & = \exp\left[-\left(\frac{t}{k}\right)^\nu\exp(g(Z))\right].
    \end{align*}
    Let $Z = (Z^1,\dots,Z^{15})^{\top}$. We simulate the covariates $Z = (Z^1,\dots,Z^{15})^{\top}$ with $Z^j \sim \mathcal B(0.4)$ for $j \in \{2,4,6,9,11,12\} $ and $Z^j \sim U[0,1],$ for $j \in \{1,3,5,7,8,10,13,14,15\}$. As a result, only the first $10$ covariates are associated with the event times. However, the other $5$ covariates, that are independent from the event times, will still be included in our regression models when analyzing those data. The censoring distribution is the same as in scheme 1, leading to $47 \%$ censored data. The time horizon $\tau$ is chosen as the $90$th percentile of the observed times which corresponds to $\tau=2.8$ in this setting.
\end{description}

\subsection{Standard versus split pseudo-observations}

\begin{figure}[ht!]
    \centering
    \includegraphics[scale=0.75]{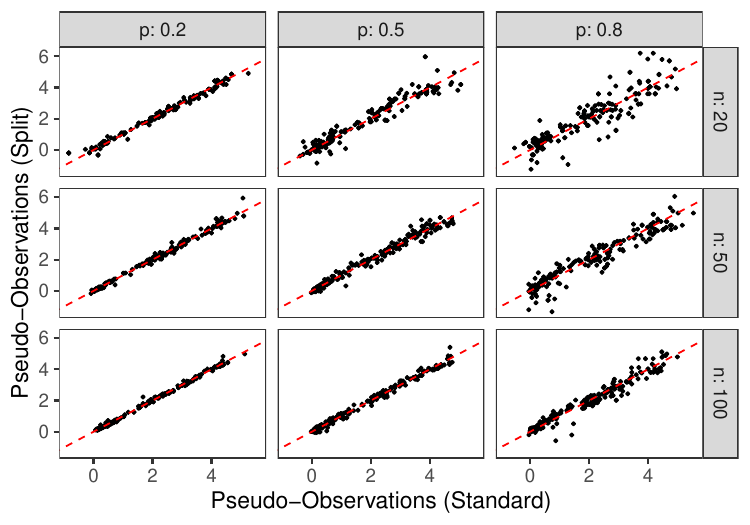}
     \caption{Comparison of standard and split pseudo-observations on data simulated with the first simulation scheme, with varying sample sizes $n$ and $n_2 = \rho n$ for the split pseudo-observations.}
    \label{fig::standard_vs_split}
\end{figure}

In this section, we compare the values of the standard pseudo-observations and the split pseudo-observations constructed as described in Section~\ref{sec::pobs}. We construct samples of different sizes $n\in\{20, 50, 100\}$ according to the simulation scheme 1 and we vary the proportion of the two samples $D_{n_1}$ and $D_{n_2}$ used for the construction of the split pseudo-observations by setting $n_2 = \rho n$ with $\rho\in\{0.2,0.5,0.8\}$. The process is repeated $50$ times. For each split pseudo-observation, a corresponding standard pseudo-observation can also be computed and compared. For clarity, $200$ pseudo-observations are randomly drawn for each scenario and their scatter plots are displayed in Figure~\ref{fig::standard_vs_split}.
Overall, we observe a very close agreement between the values of the standard and split pseudo-observations. As the sample size increases, the similarity between the two methods increases even more. A larger discrepancy occurs when $n_2$ is larger. This is a consequence of the sample size $n_1$ used for the Kaplan-Meier estimator in the construction of the split pseudo-observations which is smaller when $n_2$ is larger. However, even in the worst scenario ($\rho=0.8$ and $n=20$), the discrepancy between the two methods remains moderate.

\begin{figure}
    \centering
    
    \subfloat[Simulation scheme 1]{%
        \includegraphics[width=0.89\linewidth]{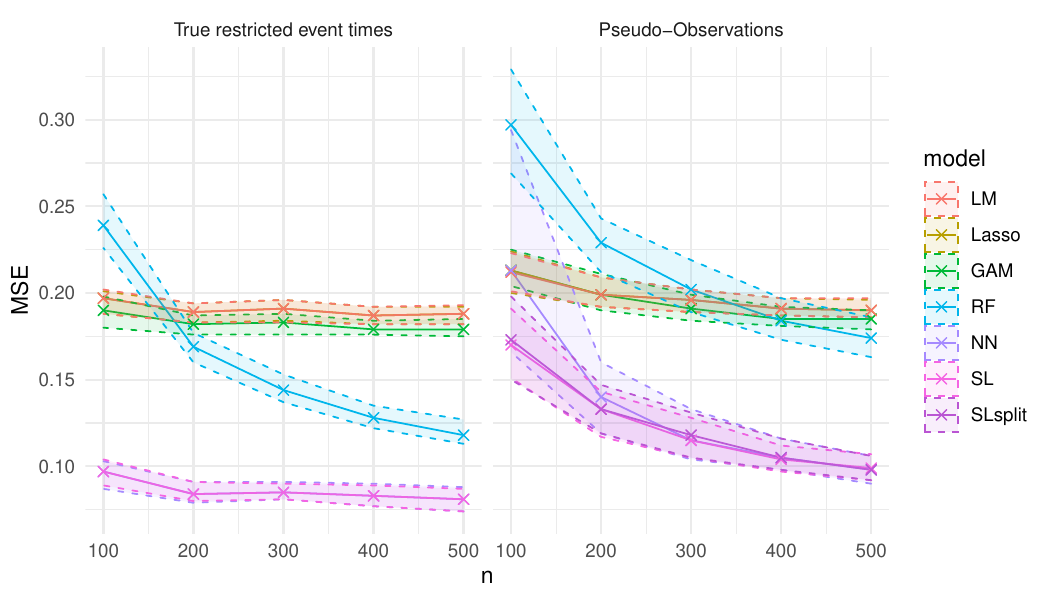}%
        \label{fig::mse_graph_B}%
    }

    \subfloat[Simulation scheme 2]{%
        \includegraphics[width=0.89\linewidth]{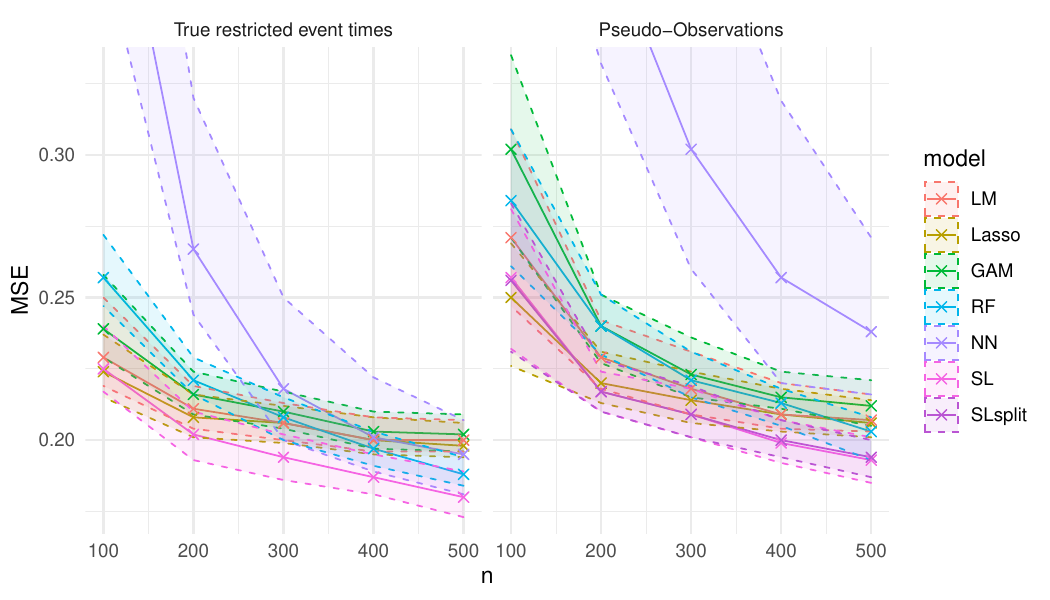}%
        \label{fig::mse_graph_C}%
    }

    \caption{Mean Squared Errors (MSE) for the restricted time prediction with algorithms implemented on the true uncensored observations (first column) and the pseudo-observations (second column). The considered algorithms are: the linear model (LM), the Lasso, the generalized additive model (GAM), the random forests (RF), the neural network (NN) and the super learner (SL) combining all these algorithms. In the second column, standard and split pseudo-observations-based super learners are implemented (SL and SLsplit). Their curves overlap almost entirely. The sample size of the training set ranges from $n=100$ to $n=500$ and two simulation schemes are considered in the top and bottom rows. The MSEs of all the algorithms are computed on an independent test set of size $1000$ and the whole procedure is repeated $80$ times. Values of the median (cross), first and third quantiles (dashed lines) of the MSEs are reported.}
    \label{fig::mse_graph}
\end{figure}

\subsection{Super learner implemented with standard versus split pseudo-observations}

In this section, we want to compare the implementation of the two super learner algorithms, as introduced in Section~\ref{sec::super_learner_RC}: The first one is based on standard pseudo-observations and the second one is based on the split pseudo-observations. While the two types of pseudo-observations are very similar (as illustrated in the previous section), the two algorithms are still different in their implementations. The first one applies candidate learners directly on the pseudo-observations, treating those observations as uncensored data.
On the contrary, in the second approach, the candidate learners are applied to the censored observations and require algorithms that can handle right-censored observations. In order to be able to compare the two approaches, we only consider learning algorithms based on pseudo-observations. More precisely, for the second approach, pseudo-observations are computed from the training set before applying the learning algorithms. The algorithms considered for the two methods are: the Linear Model (LM), the Lasso, the Generalized Additive Model (GAM), the Random Forests (RF) and the Neural Network (NN). For both methods, once the weights attributed to the candidate learners are computed, the algorithms are trained one last time on the whole dataset and combined using those weights, see Figure~\ref{fig::pobs_SL} and Figure~\ref{fig::pobs_split_SL}. In our simulation setting, this last step is identical for both methods, i.e. standard pseudo-observations are computed on the whole dataset and used to train LM, Lasso, GAM, RF and NN. The only difference between the two approaches results in the computation of the weights in the construction of the super learner. The performances of the two methods are further compared with the oracle method that applies the same algorithms directly to the unoberved true event times. All super learners are trained using $6$-fold cross-validation. The results are displayed in Figure~\ref{fig::mse_graph}, for the three different methods in the two simulation schemes with increasing sample sizes $n\in\{100,200,300,400,500\}$. The MSEs of the super learner algorithms and of every candidate learners are computed on an independent test set of size $1000$ and the whole procedure is repeated $80$ times for each sample size. The median and the first and third quantiles of the MSEs are reported in the figure. Exact values can be found in Table~1 in the Supplementary Material.

We first observe that the two pseudo-observations-based super learner algorithms share similar performances for all sample sizes, regardless of the simulation scheme.
This indicates that the two algorithms are almost identical in practice. Also, their performance is only slightly worse than the oracle algorithm. For the three algorithms, the super learner outperforms all individual learners. This is an illustration of Theorem~\ref{thm::CV_uncensored} in~\citet{van_der_laan_super_2007} for the oracle algorithm and of Corollary~\ref{cor::CV_censored_pobs} for the split pseudo-observations algorithm. While we were not able to derive a similar theoretical result for the super learner based on standard pseudo-observations, the simulations clearly support this conjecture. Finally, the distribution of the weights assigned to each learner can be found in the Supplementary Material in Figure~1. We observe in the simulation scheme 1 that the largest weight is assigned to the NN algorithm with an increasing value of the weight when the sample size increases. For instance, for $n=500$, the median weight allocated to the NN algorithm is equal to $0.906$, $0.762$ and $0.727$ for the oracle algorithm, the super learner combined with standard pseudo-observations and the super learner combined with split pseudo-observations, respectively. This is in accordance with the results in Figure~\ref{fig::mse_graph} where the NN algorithm outperforms all other individual algorithms and is only slightly less performing than the super learner. In the simulation scheme 2, the weights are more equally shared among all learners. The RF tend to have the largest weights with an increasing value when the sample size increases. For example, when $n=500$, the median weights for the RF, LM and NN are equal to $0.525$, $0.232$, $0.172$ respectively for the oracle algorithm, equal to $0.467$, $0.345$, $0.071$ respectively for the super learner combined with standard pseudo-observations and equal to $0.462$, $0.370$, $0.014$ respectively for the super learner combined with split pseudo-observations.

Since the two implementations of the super learner are almost identical, we only study the super learner based on standard pseudo-observations in the next section. This algorithm has the advantage to have a lower computational complexity and to allocate more data to the training set.

\subsection{Prediction performance of the super learner and comparison with other methods}

\begin{figure}[ht!]
    \centering
    
    \subfloat[Simulation scheme 1]{%
        \includegraphics[width=0.49\linewidth]{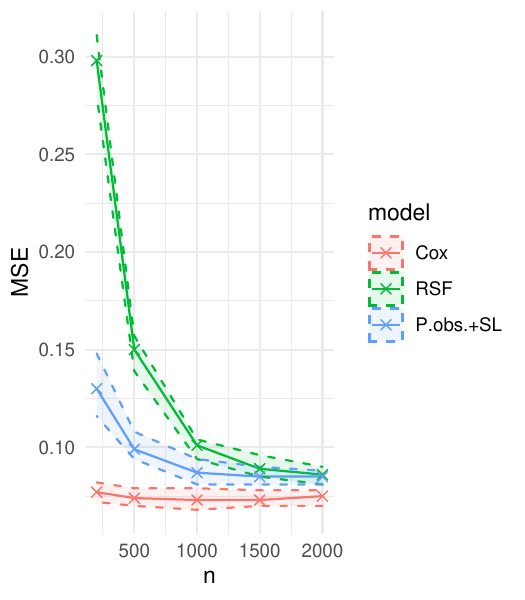}%
        \label{fig::mse_surv_graph_B}%
    }
    \hfill
    \subfloat[Simulation scheme 2]{%
        \includegraphics[width=0.49\linewidth]{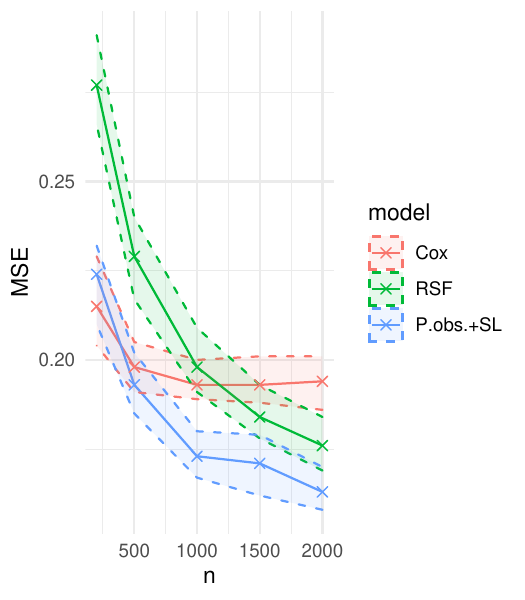}%
        \label{fig::mse_surv_graph_C}%
    }

    \caption{Mean Squared Errors (MSE) for the restricted time prediction with three different methods: the Cox model (with no interactions between covariates), the RSF and the super learner combined with standard pseudo-observations. The algorithms are trained on samples of size $n\in\{200,500,1000,1500,2000\}$ and two simulation schemes are considered in the left and right columns. The MSEs are computed on an independent test set of size $1000$ and the whole procedure is repeated $80$ times. Values of the median (cross), first and third quantiles (dashed lines) of the MSEs are reported.}
    \label{fig::mse_surv_graph}
\end{figure}

In this section, we compare our super learner algorithm combined with the standard pseudo-observations to the standard Cox model (without including interactions) and the Random Survival Forests (RSF) algorithm~\citep[see][]{ishwaran_random_2008}. For the super learner, we employ the same candidate learners as in the previous section and still use $6$-fold cross-validation. For the Cox and RSF algorithms, the RMST is obtained by integrating the estimated survival curve between $0$ and $\tau$. 
We simulated data of increasing size $n\in\{200,500,1000,1500,2000\}$ in the two simulation schemes. As previously, the MSEs of all prediction models are computed on an independent test set of size $1000$ and the procedure is repeated $80$ times for each sample size. The median, first and third quantiles of the MSEs are reported in Figure~\ref{fig::mse_surv_graph}. 

The first simulation scheme consists in simulating time to events according to a standard Cox model without interactions between covariates. Hence, it is not surprising that Figure~\ref{fig::mse_surv_graph_B} shows a better performance for the Cox model compared to the other methods. However, the RSF and our method perform only slightly worse than the Cox model, with an increasing performance as the sample size increases. For small sample sizes, the super learner is clearly more performing than the RSF. On the other hand, the second simulation scheme is a Cox model with complex interactions between the covariates. In the results presented in Figure~\ref{fig::mse_surv_graph_C}, our method outperforms the Cox model for $n\geq 500$ and also exhibits better performances than the RSF for all sample sizes.

\section{Applications on real data} \label{sec::realdata}

\subsection{The maintenance dataset}

We first study a maintenance dataset publicly available in the \texttt{PySurvival} Python package. It gathers data on $1000$ machines, for which the number of weeks in activity has been recorded. The aim is to study the failure time of the machines. Overall
$40\%$ of the collected times are right-censored, and no failures occurred before $60$ weeks of operation. Observed times range from $1$ to $93$ weeks, and we set the time horizon to the $90$th quantile of the observed times which corresponds to $\tau = 88$. The dataset contains $5$ predictors: three continuous covariates, pressure, moisture, temperature and two categorical variables, the team using the machine (three levels) and the manufacturer (four levels). 

We first investigate the possible dependence between censoring and covariates. To that aim we fit a Cox model and a RSF to the data using the censoring time as the outcome. Both methods indicate no evidence of dependence: the global p-value for the Cox model is equal to $0.604$ while the variable importance measure per permutation in the RSF does not exceed $0.005$ for any of the variables (see Table~2 in Supplementary Material). 

Next, we aim at predicting the time to failure using several models. We were not able to implement the Cox model from the \texttt{coxph} function because the Newton-Raphson algorithm ran out of iterations without converging. We use instead the RSF and the super learner based on pseudo-observations. For the latter, cross-validation is fixed to $6$ folds and the library was composed of LM, Lasso, GAM and RF. Both learning models are compared and analyzed using the methods developed in~\citet{cwiling_comprehensive_2023}. 

First, the MSE is evaluated with the weighted residual sum of squares (WRSS), an inverse probability censoring weights (IPCW) estimator derived from the Brier score. Since our previous analysis supports the independent censoring assumption, we simply use the Kaplan-Meier estimator to compute the censoring weights. Results obtained on $10$-fold cross-validation are displayed in Figure~\ref{fig::mse_mtnc}, indicating reasonable prediction errors in both cases considering the time range, with better performances for our method. 

Next, a prediction interval for the true restricted time to failure of level $90\%$ is computed for each item. These intervals are computed with the IPCW Rank-One-Out Split Conformal algorithm. This method relies on the distribution of the weighted residuals where, again, the Kaplan-Meier estimator is used for the censoring weights. The median length of the prediction intervals equals $7.217$ and $6.789$ for the RSF and the super learner combined with pseudo-observations, respectively. An illustration of these intervals, for some machines chosen randomly in the dataset, is provided in the Supplementary Material in Figure~2. 

Finally, a global variable importance test is implemented based on the Leave-One-Covariate-Out conformal methodology. This test indicates whether removing a variable from the training set significantly deteriorates the prediction quality. This test is conducted by splitting the data into a training set (used for the implementation of the RMST estimator) and a validation set (used for the implementation of the test itself). Since this procedure is sensitive to the split, we conduct a multi-splitting procedure to stabilize the results along with a method for controlling the familywise rejection rate~\citep[see][for more details on this method]{diciccio_exact_2020}. Consequently, the test is implemented at the $10\%$ level on $M=40$ different splits of the data and the $M$ p-values are aggregated by computing twice the median value. This ensures to obtain a $10\%$ level of the overall test. The results are displayed in Table~\ref{tab::multisplitting_mtnc}. They show clear evidence of the influence of the team and provider variables on the failure time, while pressure does not alter the prediction quality and moisture seems to only have very limited impact in the RSF. However, both models do not reach the same conclusion for temperature, which seems to have a strong predictive power in the RSF but not in the super learner.

\begin{figure}
    \centering
    \includegraphics[scale=0.65]{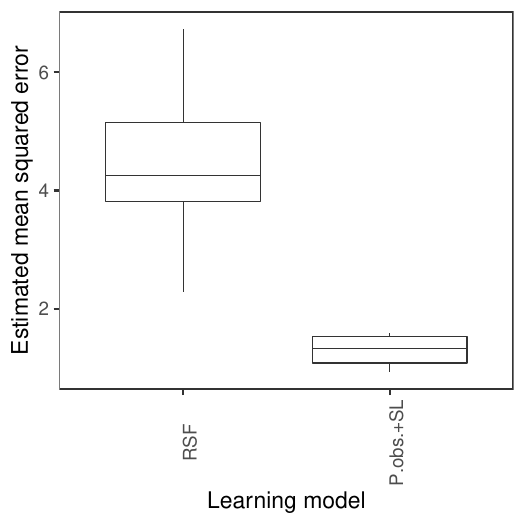}
    \caption{Estimation of the MSE on the \texttt{maintenance} dataset using the WRSS based on $10$-fold cross-validation, for the RSF and the super learner based on pseudo-observations.}
    \label{fig::mse_mtnc}
\end{figure}

\begin{table}[ht!]
    \centering
    \resizebox{0.55\textwidth}{!}{
    \begin{tabular}{|l|ll|ll|}
\hline
Variable & \multicolumn{2}{l|}{P-value RSF} & \multicolumn{2}{l|}{P-value P.obs.+SL} \\
\hline
Pressure & 1 &  & 1 &  \\

Moisture & 0.257 &  & 1 &   \\

Temperature & 0.003 & ** & 1 &  \\

Team & 0 & *** & 0 & \qquad ***  \\

Provider & 0 & *** & 0 & \qquad ***  \\
\hline
    \end{tabular}
}
    \smallskip
    
    Significance codes: 0 '***' 0.001 '**' 0.01 '*' 0.05 '.' 0.1 ' ' 1
    \caption{Variable importance of the \texttt{maintenance} dataset for the RSF and the super learner based on pseudo-observations using a multi-splitting procedure with $M=40$ splits. The global variable importance test is performed on each split and the $M$ p-values are aggregated by computing twice the median value in order to obtain a $10\%$ overall test.}
    \label{tab::multisplitting_mtnc}
\end{table}

\subsection{The colon cancer dataset}

\begin{figure}[ht!]
    \centering
    \includegraphics[scale=0.65]{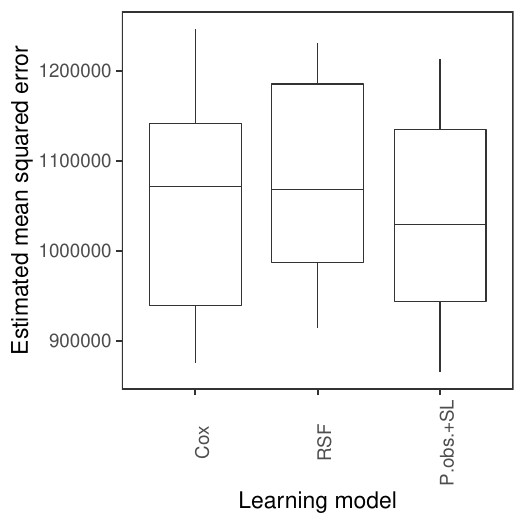}
    \caption{Estimation of the MSE on the \texttt{colon} dataset using the WRSS based on $10$-fold cross-validation, for the Cox model, the RSF and the super learner based on pseudo-observations.}
    \label{fig::mse_colon}
\end{figure}

We study now a dataset from a clinical trial on the effect of chemotherapy for colon cancer. The \texttt{colon} data are available in the R \texttt{survival} package. The complete case analysis contains $888$ patients whose follow-up times range from $8$ days to $3329$ days ($9.12$ years). The goal of the study is to analyze the progression free survival (PFS) defined as the time elapsed from randomization to the minimum between recurrence and death. For this endpoint, $46\%$ of the observations are censored, that is neither recurrence or death is observed. We set the time horizon to the $90$th quantile of the observed times which corresponds to $\tau = 2672$ days. We use as predictors the type of treatment administrated (three levels), sex (binary), age (in years), obstruction indicator of the colon by the tumor (binary), whether the colon was perforated or not (binary), whether or not it adhered to nearby organs (binary), number of lymph nodes with detectable cancer (integer value that ranges from $0$ to $33$), level of differentiation of the tumor (three levels), extent of local spread (four levels), and whether the time from surgery to registration was short or long (binary). 

We first investigate the possible dependence between censoring and covariates. As for the previous dataset, we fit a Cox model and a RSF to the data using the censoring time as the outcome. Both methods indicate no evidence of dependence: the global p-value for the Cox model is equal to $0.383$ while the variable importance measure per permutation in the RSF does not exceed $0.015$ for any of the variables (see Table~3 in Supplementary Material). 

Next, we predict the time to progression with the Cox model, the RSF and the pseudo-observations-based super learner. For the latter, $6$-fold cross-validation is implemented with the library composed of LM, Lasso, GAM and RF. As previously, the performances of the learning models are analyzed using the methods developed in~\citet{cwiling_comprehensive_2023} and the censoring distribution is estimated using the Kaplan-Meier estimator for all the methods involving IPCW. 

In terms of MSE, all three models seem to have similar performances (see Figure~\ref{fig::mse_colon}). The prediction error is particularly large, emphasizing the difficulty for all the learners to provide accurate individual predictions. 
This is in accordance with the length of the $90\%$ prediction intervals, displayed in Figure~3 in Supplementary Material, whose median equals $2961$ for the Cox model, $3098$ for the RSF and $3050$ for the super learner combined with pseudo-observations. 
Finally, the variable importance test is implemented based on the multi-split approach with $40$ different splits and overall rejection rate equal to $10\%$. The results are presented in Table~\ref{tab::multisplitting_colon}. We observe that the importance of the type of treatment and extent of local spread variables are highly significant for the Cox and super learner algorithms while moderately significant for the RSF (p-value equals $0.084$ and $0.063$, respectively). The importance of all the other variables is far from significance for all algorithms.

\begin{table}[ht!]
    \centering
    \resizebox{0.7\textwidth}{!}{
    \begin{tabular}{|l|ll|ll|ll|}
\hline
Variable & \multicolumn{2}{l|}{P-value Cox} & \multicolumn{2}{l|}{P-value RSF} & \multicolumn{2}{l|}{P-value P.obs.+SL} \\
\hline
Treatment & 0 & *** & 0.084 & . & 0.002 & ** \\

Sex & 0.981 & & 0.724 &  & 0.622 & \\

Age & 1 & & 1 &  & 1 & \\

Obstruction & 1 & & 1 &  & 1 & \\

Perforation & 1 & & 1 &  & 1 & \\

Adherence & 1 & & 1 &  & 1 & \\

Nodes & 1 & & 0.637 &  & 0.662 & \\

Differentiation & 1 & & 1 &  & 1 & \\

Spread & 0 & *** & 0.063 & . & 0.002 & ** \\

Surgery & 1 & & 1 &  & 1 & \\
\hline
    \end{tabular}
}
    \smallskip
    
    Significance codes: 0 '***' 0.001 '**' 0.01 '*' 0.05 '.' 0.1 ' ' 1
    \caption{Variable importance of the \texttt{colon} dataset for the Cox model, the RSF and the super learner based on pseudo-observations using a multi-splitting procedure with $M=40$ splits. The global variable importance test is performed on each split and the $M$ p-values are aggregated by computing twice the median value in order to obtain a $10\%$ overall test.}
    \label{tab::multisplitting_colon}
\end{table}

\section{Conclusion}

Predicting the restricted time to event is of great interest in many applications. When using a quadratic loss function, this problem amounts to estimating the RMST. In this work we focused on the super learner combined with pseudo-observations to achieve this goal. The algorithm is simple to implement as it only requires an extra initial step for constructing the pseudo-observations. From there, the standard super learner is simply applied to the pseudo-observations and allows to get access to all the prediction algorithms that are classically used when dealing with uncensored observations. It also enjoys the nice property of the super learner: It will automatically select the best learner among all the candidates (for the discrete super learner) or it will provide the best combination of the candidates (for the continuous super learner). This is an important feature in applications since the best candidate can vary from one study to another or when the sample size increases. Those results were proved in our context of right-censored data, when the split pseudo-observations are used. We also observed this property on simulated data and we empirically showed that combining the super learner with split or standard pseudo-observations provides very similar results. This is why we recommend to combine the super learner with standard pseudo-observations in practice, as it provides a simpler method and it allows to allocate more data to the training of candidate learners.  

Interestingly, the use of split pseudo-observations might go beyond the scope of the super learner when studying theoretical properties of an estimator based on pseudo-observations. Indeed, they are almost identical to standard pseudo-observations, yet they do not suffer from dependency problems which makes them much easier to study in theoretical problems. 

Our method relies on the independent censoring assumption inherent to pseudo\hyp{}observations. When dealing with categorical covariates it is still possible to use the approach of~\cite{andersen_pseudo-observations_2010} (see Section 2.2) based on a mixture of Kaplan-Meier estimators to relax this assumption. Finally, our method could easily be extended to other situations such as recurrent events, competing risks, or left truncation by using a different definition of pseudo-observations adapted to these types of data~\citep[see][]{binder_pseudo-observations_2014,grand_note_2019,furberg_bivariate_2023}. This is left to future research work.

In practice, several methods exist to implement pseudo-observations-based super learners. It would be of interest to provide some guidelines to practitioners, for instance on the choice between our approach and the ones based on IPCW or AUC losses. The main novelty of our work, as compared to previous methods, relies on theoretical results about the pseudo-observations-based super learner. Empirical studies comparing the different methods are warranted.

\section*{Acknowledgments}

We thank Professor Antoine Chambaz for his helpful suggestions regarding the application of the super learner to pseudo-observations. His insights greatly improved the quality of our work.

\section{Technical Results} \label{sec::proofs}

\begin{lemma}\label{lemma::risk_minimizer_quad_pobs}
     If we consider the quadratic loss function $L\po(\psi,\PnBn^1,O) = \bi(\Gamma_O(\PnBn^1) - \psi(Z)\bi)^2$, with $O$ independent from $\PnBn^1$ and $B_n$, then the risk minimizer is $\psi^*_1(Z) = \Esp\bi[\Gamma_O(\PnBn^1) \mid Z, \PnBn^1,B_n\bi]$.
\end{lemma}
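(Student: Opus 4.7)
The plan is to apply the classical bias–variance decomposition for quadratic loss, conditioning on $(Z, P_{B_n}^1, B_n)$. Let me set $\mu(Z) := \Esp\bi[\Gamma_O(P_{B_n}^1) \mid Z, P_{B_n}^1, B_n\bi]$ (so by construction $\mu$ is a function of $Z$, with $P_{B_n}^1$ and $B_n$ playing the role of parameters). The goal is to show that, over $\psi \in \Psi$, the risk
\[
\Theta\po(\psi, P_{B_n}^1, P) = \EspBn \int \bi(\Gamma_o(P_{B_n}^1) - \psi(z)\bi)^2 dP(o)
\]
is minimized by $\psi = \mu$.

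First, I would expand the integrand around $\mu(Z)$:
\[
\bi(\Gamma_O(P_{B_n}^1) - \psi(Z)\bi)^2 = \bi(\Gamma_O(P_{B_n}^1) - \mu(Z)\bi)^2 + 2\bi(\Gamma_O(P_{B_n}^1) - \mu(Z)\bi)\bi(\mu(Z) - \psi(Z)\bi) + \bi(\mu(Z) - \psi(Z)\bi)^2.
\]
Then I would take conditional expectation given $(Z, P_{B_n}^1, B_n)$. The key observation is that $\mu(Z)$ and $\psi(Z)$ are measurable with respect to this conditioning sigma-algebra, so they can be pulled out of the conditional expectation of the cross term, leaving
\[
\Esp\bi[\Gamma_O(P_{B_n}^1) - \mu(Z) \mid Z, P_{B_n}^1, B_n\bi] = 0
\]
by the very definition of $\mu(Z)$. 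Hence the cross term vanishes.

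Next, applying the tower property and then the outer expectation $\EspBn$, I obtain
\[
\Theta\po(\psi, P_{B_n}^1, P) = \EspBn \Esp\bi[\bi(\Gamma_O(P_{B_n}^1) - \mu(Z)\bi)^2\bi] + \EspBn \Esp\bi[\bi(\mu(Z) - \psi(Z)\bi)^2\bi].
\]
The first term does not depend on $\psi$, while the second term is non-negative and is zero when $\psi(Z) = \mu(Z)$. Therefore the risk minimizer is $\psi^*_1(Z) = \mu(Z) = \Esp[\Gamma_O(P_{B_n}^1) \mid Z, P_{B_n}^1, B_n]$, as claimed.

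The argument is essentially textbook and presents no real obstacle; the only mild subtlety is bookkeeping of the conditioning, in particular recognising that $\mu(Z)$ is a legitimate element of $\Psi$ (so that the infimum is actually attained in $\Psi$, which is assumed implicitly as part of the regularity of the parameter space) and that the independence of $O$ from $(P_{B_n}^1, B_n)$ plays no role in the argument itself — it only guarantees that the risk is well-defined in the framework of Section~\ref{sec::super_learner_RC}.
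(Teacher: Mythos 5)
Your proof is correct and follows essentially the same route as the paper's: both expand the quadratic loss around the conditional expectation $\Esp[\Gamma_O(P_{B_n}^1)\mid Z, P_{B_n}^1, B_n]$, kill the cross term by the tower property, and conclude that minimizing the risk reduces to minimizing the squared distance to that conditional mean. Your closing remark about the role of the independence assumption and the attainability of the minimum in $\Psi$ is a reasonable bookkeeping observation but does not change the argument.
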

\begin{proof}
    We have
    \begin{align*}
        & \Esp\bii[L\po\bi(\psi,\PnBn^1,O\bi) \midi \PnBn^1,B_n \bii] \\
        & \quad = \Esp\biii[\bii(\Gamma_O(\PnBn^1) - \psi(Z)\bii)^2 \midii \PnBn^1,B_n \biii] \\
        & \quad = \Esp\biii[\bii(\Gamma_O(\PnBn^1) - \Esp\bi[\Gamma_O(\PnBn^1) \mid Z, \PnBn^1,B_n\bi] + \Esp\bi[\Gamma_O(\PnBn^1) \mid Z, \PnBn^1,B_n \bi] - \psi(Z)\bii)^2 \midii \PnBn^1,B_n \biii] \\
        & \quad = \Esp\biii[\bii(\Gamma_O(\PnBn^1) - \Esp\bi[\Gamma_O(\PnBn^1) \mid Z, \PnBn^1,B_n\bi]\bii)^2 \midii \PnBn^1,B_n \biiii] \\
        & \qquad + \Esp\biii[\bii(\Esp\bi[\Gamma_O(\PnBn^1) \mid Z, \PnBn^1,B_n \bi] - \psi(Z)\bii)^2 \midii \PnBn^1,B_n \biii] \\
        & \qquad + 2 \Esp\biii[\bii(\Gamma_O(\PnBn^1) - \Esp\bi[\Gamma_O(\PnBn^1) \mid Z, \PnBn^1,B_n\bi]\bii)\bii(\Esp\bi[\Gamma_O(\PnBn^1) \mid Z,B_n\bi] - \psi(Z)\bii) \midii \PnBn^1,B_n \biii]  
    \end{align*}
    and
    \begin{align*}
        & \Esp\biii[\bii(\Gamma_O(\PnBn^1) - \Esp\bi[\Gamma_O(\PnBn^1) \mid Z, \PnBn^1,B_n\bi]\bii) \bii(\Esp\bi[\Gamma_O(\PnBn^1) \mid Z, \PnBn^1,B_n\bi] - \psi(Z)\bii) \midii \PnBn^1,B_n \biii] \\
        & \quad = \Esp\biii[\Esp\biii[\bii(\Gamma_O(\PnBn^1) - \Esp\bi[\Gamma_O(\PnBn^1) \mid Z, \PnBn^1,B_n\bi]\bii) \\
        & \qquad \bii(\Esp\bi[\Gamma_O(\PnBn^1) \mid Z, \PnBn^1,B_n\bi] - \psi(Z)\bii)\midii Z,  \PnBn^1,B_n\biii] \midii \PnBn^1,B_n \biii] \\
        & \quad = \Esp\biii[\bii(\Esp\bi[\Gamma_O(\PnBn^1) \mid Z, \PnBn^1,B_n\bi] - \Esp\bi[\Gamma_O(\PnBn^1) \mid Z, \PnBn^1,B_n\bi]\bii) \\
        & \qquad \bii(\Esp\bi[\Gamma_O(\PnBn^1) \mid Z, \PnBn^1,B_n \bi] - \psi(Z)\bii) \midii \PnBn^1,B_n \biii] \\
        & \quad = 0.
    \end{align*}
    Hence
    \begin{align*}
        & \argmin_{\psi \in \Psi} \EspBn \Esp\bi[L\po(\psi,\PnBn^1,O) \mid \PnBn^1,B_n\bi] \\
        & \quad = \argmin_{\psi \in \Psi} \EspBn\Esp\biii[\bii(\Esp\bi[\Gamma_O(\PnBn^1) \mid Z, \PnBn^1,B_n\bi] - \psi(Z)\bii)^2 \midii \PnBn^1,B_n\biii].
    \end{align*}
\end{proof}

We prove that Lemma 3 in~\citet{dudoit_asymptotics_2005} is still valid with the pseudo-observation quadratic loss function.
\begin{lemma}\label{lemma::quadratic_var_bound_pobs}
    Same setup and assumptions as in Theorem~\ref{thm::CV_censored_pobs}. Conditional on the training sets empirical distributions $\PnBn^0$, $\PnBn^1$ and split vector $B_n$, define, for any $k=1,\dots,K_n$, the random variable $X_k = L\po(\psihat_k(\PnBn^0),\PnBn^1,O) - L\po(\psi^*_1,\PnBn^1,O)$, where $O$ is independent from $\PnBn^0$, $\PnBn^1$, $B_n$, and $L\po$ denotes the quadratic loss function $L\po(\psi,\PnBn^1,O) = (\Gamma_O(\PnBn^1) - \psi(Z))^2$. Then
    \begin{equation*}
        \Var\bi(X_k \mid \PnBn^0, \PnBn^1, B_n\bi) \leq M_2 \Esp\bi[X_k \mid \PnBn^0, \PnBn^1, B_n\bi].
    \end{equation*}
\end{lemma}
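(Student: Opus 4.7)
The plan is to adapt the classical argument for Lemma~3 in \citet{dudoit_asymptotics_2005} to the pseudo-observation loss by combining an algebraic bound on $X_k^2$ with the risk-minimizer characterization from Lemma~\ref{lemma::risk_minimizer_quad_pobs}. The key observation is that the difference-of-squares structure of $X_k$ lets us control $X_k^2$ by a constant times $(\psi^*_1(Z) - \psihat_k(\PnBn^0)(Z))^2$, which in turn equals the conditional mean of $X_k$.

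First, I would factor $X_k$ via $a^2 - b^2 = (a-b)(a+b)$ with $a = \Gamma_O(\PnBn^1) - \psihat_k(\PnBn^0)(Z)$ and $b = \Gamma_O(\PnBn^1) - \psi^*_1(Z)$ to obtain
\[
    X_k = \bi(\psi^*_1(Z) - \psihat_k(\PnBn^0)(Z)\bi)\bi(2\Gamma_O(\PnBn^1) - \psihat_k(\PnBn^0)(Z) - \psi^*_1(Z)\bi).
\]
By~(\ref{eq::assumption_thm_2}), together with the characterization $\psi^*_1(Z) = \Esp[\Gamma_O(\PnBn^1)\mid Z, \PnBn^1, B_n]$ from Lemma~\ref{lemma::risk_minimizer_quad_pobs} which ensures $|\psi^*_1(Z)| \leq M$ a.s., each of $\Gamma_O(\PnBn^1)$, $\psihat_k(\PnBn^0)(Z)$ and $\psi^*_1(Z)$ is bounded in absolute value by $M$. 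The second factor is therefore bounded by $4M$, whence
\[
    X_k^2 \leq 16 M^2 \bi(\psi^*_1(Z) - \psihat_k(\PnBn^0)(Z)\bi)^2 = M_2 \bi(\psi^*_1(Z) - \psihat_k(\PnBn^0)(Z)\bi)^2.
\]

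Second, I would identify $\Esp[X_k \mid \PnBn^0, \PnBn^1, B_n]$ via the Pythagorean decomposition already derived inside the proof of Lemma~\ref{lemma::risk_minimizer_quad_pobs}: for any deterministic $\psi \in \Psi$, conditional on $\PnBn^1, B_n$ and exploiting independence of $O$ from the training sets,
\[
    \Esp\bi[(\Gamma_O(\PnBn^1) - \psi(Z))^2 \midi \PnBn^1, B_n\bi] = \Esp\bi[(\Gamma_O(\PnBn^1) - \psi^*_1(Z))^2 \midi \PnBn^1, B_n\bi] + \Esp\bi[(\psi^*_1(Z) - \psi(Z))^2 \midi \PnBn^1, B_n\bi].
\]
Applying this with $\psi = \psihat_k(\PnBn^0)$ and further conditioning on $\PnBn^0$ (which does not affect the first term on the right) yields
\[
    \Esp[X_k \mid \PnBn^0, \PnBn^1, B_n] = \Esp\bi[(\psi^*_1(Z) - \psihat_k(\PnBn^0)(Z))^2 \midi \PnBn^0, \PnBn^1, B_n\bi].
\]

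The conclusion then follows by chaining $\Var(X_k \mid \PnBn^0, \PnBn^1, B_n) \leq \Esp[X_k^2 \mid \PnBn^0, \PnBn^1, B_n]$ with the two displays above. I do not anticipate a substantive obstacle: the argument is essentially algebraic, and the only mild care needed is to verify that $\psi^*_1$ inherits the uniform bound $M$, which is immediate from the explicit formula in Lemma~\ref{lemma::risk_minimizer_quad_pobs} combined with $|\Gamma_O(\PnBn^1)| \leq M$ a.s.\ under~(\ref{eq::assumption_thm_2}).
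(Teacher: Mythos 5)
Your proposal is correct and follows essentially the same route as the paper's proof: the same difference-of-squares factorization of $X_k$, the bound $\lvert 2\Gamma_O(\PnBn^1) - \psihat_k(\PnBn^0)(Z) - \psi^*_1(Z)\rvert \leq 4M$, the identification $\Esp[X_k \mid \PnBn^0,\PnBn^1,B_n] = \Esp[(\psi^*_1(Z)-\psihat_k(\PnBn^0)(Z))^2 \mid \PnBn^0,\PnBn^1,B_n]$ via $\psi^*_1(Z)=\Esp[\Gamma_O(\PnBn^1)\mid Z,\PnBn^1,B_n]$, and the chain $\Var \leq \Esp[X_k^2] \leq M_2\,\Esp[X_k]$. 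Your use of the Pythagorean decomposition rather than the paper's direct tower-property computation on the factored form is an inessential variant of the same orthogonality argument, and your remark that $\psi^*_1$ inherits the bound $M$ is a harmless extra verification.
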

\begin{proof}
    For the quadratic loss function, $\psi^*_1(Z) = \Esp\bi[\Gamma_O(\PnBn^1) \mid Z, \PnBn^1,B_n\bi]$ from Lemma~\ref{lemma::risk_minimizer_quad_pobs}. As $$X_k = \bi(\psi^*_1(Z) - \psihat_k(\PnBn^0)(Z)\bi)\bi(2\Gamma_O(\PnBn^1) - \psihat_k(\PnBn^0)(Z)-\psi^*_1(Z)\bi),$$
    we have
    \begin{align*}
        & \Esp\bi[X_k \mid \PnBn^0, \PnBn^1, B_n\bi] \\
        & = \Esp\bi[\Esp\bi[X_k \mid Z, \PnBn^0, \PnBn^1, B_n\bi]\mid \PnBn^0, \PnBn^1, B_n\bi] \\
        & = \Esp\biii[\bii(\psi^*_1(Z) - \psihat_k(\PnBn^0)(Z)\bii) \\
        & \qquad\bii(2\Esp\bi[\Gamma_O(\PnBn^1) \mid Z, \PnBn^1,B_n\bi]  - \psihat_k(\PnBn^0)(Z)-\psi^*_1(Z)\bii) \midii \PnBn^0, \PnBn^1, B_n\biii] \\
        & = \Esp\bii[\bi(\psi^*_1(Z) - \psihat_k(\PnBn^0)(Z)\bi)^2 \midi \PnBn^0, \PnBn^1, B_n\bii].
    \end{align*}
    As $\bi\lvert 2\Gamma_O(\PnBn^1) - \psihat_k(\PnBn^0)(Z)-\psi^*_1(Z) \bi\rvert \leq 4M$ by assumption~\eqref{eq::assumption_thm_2}, and with $M_2 = (4M)^2$, we find
    \begin{align*}
        \Var\bi(X_k \mid \PnBn^0, \PnBn^1, B_n\bi) & \leq \Esp\bi[X_k^2 \mid \PnBn^0, \PnBn^1, B_n\bi] \\
        & \leq (4M)^2 \Esp\bii[\bi(\psi^*_1(Z) - \psihat_k(\PnBn^0)(Z)\bi)^2 \mid \PnBn^0, \PnBn^1, B_n\bii] \\
        & = M_2 \Esp\bi[X_k \mid \PnBn^0, \PnBn^1, B_n \bi].
    \end{align*}
\end{proof}

\subsection{Proof of Theorem~\ref{thm::CV_censored_pobs}}
We write
    \begin{align*}
        0 & \leq \thetatildepo(\khat\po) - \theta^*_1 \\
        & = \EspBn \int \bii(L\po\bi(\psihat_{\khat\po}(\PnBn^0),\PnBn^1,o\bi) - L\po\bi(\psi^*_1,\PnBn^1,o\bi)\bii) dP(o) \\
        & \leq \EspBn \int \bii(L\po\bi(\psihat_{\khat\po}(\PnBn^0),\PnBn^1,o\bi) - L\po\bi(\psi^*_1,\PnBn^1,o\bi) \bii) dP(o) \\
        & \quad - (1 + \gamma) \EspBn \int \bii(L\po\bi(\psihat_{\khat\po}(\PnBn^0),\PnBn^1,o\bi) - L\po\bi(\psi^*_1,\PnBn^1,o\bi) \bii) d\PnBn^2(o) \\
        & \quad + (1 + \gamma) \EspBn \int \bii(L\po\bi(\psihat_{\ktilde\po}(\PnBn^0),\PnBn^1,o\bi) - L\po\bi(\psi^*_1,\PnBn^1,o\bi) \bii) d\PnBn^2(o) \\
        & \quad - (1 + 2\gamma) \EspBn \int \bii(L\po\bi(\psihat_{\ktilde\po}(\PnBn^0),\PnBn^1,o\bi) - L\po\bi(\psi^*_1,\PnBn^1,o\bi) \bii) dP(o) \\
        & \quad + (1 + 2\gamma) \EspBn \int \bii(L\po\bi(\psihat_{\ktilde\po}(\PnBn^0),\PnBn^1,o\bi) - L\po\bi(\psi^*_1,\PnBn^1,o\bi) \bii) dP(o)
    \end{align*}
where the first inequality follows by definition of the optimal risk $\theta^*_1$~\eqref{eq::optimal_risk_pobs} and the second by definition of $\khat\po$~\eqref{eq::cv_selector_pobs}, which implies $\thetahatpo(\khat\po) \leq \thetahatpo(\ktilde\po)$. Denote the first two terms in the last expression by $R_{\khat\po,n}$ and the third and fourth terms by $T_{\ktilde\po,n}$; the last term is the benchmark risk difference $\bi(1+2\gamma\bi)\bi(\thetatildepo(\ktilde\po) - \theta^*_1\bi)$. Hence,
\begin{equation}\label{eq::thm2_proof}
    0 \leq \thetatildepo(\khat\po) - \theta^*_1 \leq \bi(1+2\gamma\bi) \bi(\thetatildepo(\ktilde\po) - \theta^*_1\bi) + R_{\khat\po,n} + T_{\ktilde\po,n}.
\end{equation}
Let
\begin{align*}
    \Htilde{k} & = \int \bii(L\po\bi(\psihat_k(\PnBn^0),\PnBn^1,o\bi) - L\po\bi(\psi^*_1,\PnBn^1,o\bi)\bii) dP(o) \\
    \Hhat{k} & = \int \bii(L\po(\psihat_k(\PnBn^0),\PnBn^1,o) - L\po(\psi^*_1,\PnBn^1,o)\bii) d\PnBn^2(o)
\end{align*}
and
\begin{align*}
    R_{k,n}(B_n) & = \bi(1+\gamma\bi)\bi(\Htilde{k} - \Hhat{k} \bi) - \gamma \Htilde{k} \\
    T_{k,n}(B_n) & = \bi(1+\gamma\bi)\bi(\Hhat{k} - \Htilde{k} \bi) - \gamma \Htilde{k}.
\end{align*}
Note that $\Htilde{k} \geq 0$ by definition of $\psi^*_1$ as the risk minimizer. With these notations, we set $R_{k,n} = \EspBn[R_{k,n}(B_n)]$ and $T_{k,n} = \EspBn[T_{k,n}(B_n)]$. 

We have
\begin{align*}
    & \Prob\bi(R_{\khat\po,n}(B_n) > s \mid \PnBn^0, \PnBn^1, B_n\bi) \\
    & \quad = \Prob\biii(\Htilde{\khat\po} - \Hhat{\khat\po} > \frac{1}{1+\gamma}(s+\gamma \Htilde{\khat\po}) \midii \PnBn^0, \PnBn^1, B_n \biii) \\
    & \quad \leq K_n \max_{k \in \{1,\dots,K_n\}} \Prob\biii(\Htilde{k} - \Hhat{k} > \frac{1}{1+\gamma}(s+\gamma \Htilde{k}) \midii \PnBn^0, \PnBn^1, B_n \biii).
\end{align*}
Consider the random variables $X_k = L\po\bi(\psihat_k(\PnBn^0),\PnBn^1,O\bi) - L\po\bi(\psi^*_1,\PnBn^1,O\bi)$, where $O$ is independent from $\PnBn^0$, $\PnBn^1$, $B_n$. 
Let $X_{k,i} = L\po\bi(\psihat_k(\PnBn^0),\PnBn^1,O_i\bi) - L\po\bi(\psi^*_1,\PnBn^1,O_i\bi)$, $i=1,\dots,n_2$. Conditionally on $\PnBn^0, \PnBn^1, B_n$, the $X_{k,i}$'s are $n_2$ i.i.d. copies of $X_k$ in the validation set. Note that $\Hhat{k} = \sum_{i=1}^{n_2} X_{k,i} /n_2$ and $\Htilde{k} = \Esp\bi[X_k \mid \PnBn^0, \PnBn^1, B_n\bi]$, so that $\Htilde{k} - \Hhat{k} = \Esp\bi[X_k \mid \PnBn^0, \PnBn^1, B_n\bi] - \sum_{i=1}^{n_2} X_{k,i}/n_2$ represents an empirical mean of $n_2$ centered conditionally i.i.d. random variables. For the quadratic loss function, $\lvert X_k \rvert \leq 8M^2$ almost surely, from Assumption~\eqref{eq::assumption_thm_2}. From Lemma~\ref{lemma::quadratic_var_bound_pobs},
\[
    \sigma_k^2 := \Var\bi(X_k \mid \PnBn^0, \PnBn^1, B_n\bi) \leq M_2 \Esp\bi[X_k \mid \PnBn^0, \PnBn^1, B_n\bi] = M_2 \Htilde{k}.
\]
For $s > 0$, $M_1 = 8 M^2$, by Bernstein's inequality~\citep[see Lemma 1 in][]{dudoit_asymptotics_2005}, as the $X_{k,i}$'s are independent conditionally on $\PnBn^0, \PnBn^1, B_n$,
\begin{align*}
    \Prob\bi(R_{k,n}(B_n) > s \mid \PnBn^0, \PnBn^1, B_n\bi) & = \Prob\biii(\Htilde{k} - \Hhat{k} > \frac{1}{1+\gamma}(s+\gamma \Htilde{k}) \midii \PnBn^0, \PnBn^1, B_n \biii) \\
    & \leq \Prob\biii(\Htilde{k} - \Hhat{k} > \frac{1}{1+\gamma}(s+\gamma \sigma_k^2 / M_2) \midii \PnBn^0, \PnBn^1, B_n \biii) \\
    & \leq \exp\left(- \frac{n_2}{2(1+\gamma)^2} \frac{(s + \gamma \sigma_k^2 / M_2)^2}{\sigma_k^2 + \frac{M_1}{3(1+\gamma)} (s + \gamma \sigma_k^2 / M_2) } \right).
\end{align*}
Note that
\begin{align*}
    \frac{(s + \gamma \sigma_k^2 / M_2)^2}{\sigma_k^2 + \frac{M_1}{3(1+\gamma)} (s + \gamma \sigma_k^2 / M_2) } = \frac{s + \gamma \sigma_k^2 / M_2}{\frac{\sigma_k^2}{s + \gamma \sigma_k^2 / M_2} + \frac{M_1}{3(1+\gamma)}  } \geq \frac{s + \gamma \sigma_k^2 / M_2}{\frac{M_2}{\gamma} + \frac{M_1}{3(1+\gamma)}} \geq \frac{s}{\frac{M_2}{\gamma} + \frac{M_1}{3(1+\gamma)}}\cdot
\end{align*}
Then for $s > 0$ and $c(M,\gamma) = 2(1+ \gamma)^2 \left(\frac{M_1}{3} + \frac{M_2}{\gamma} \right)$,
\begin{equation*}
    \Prob\bi(R_{\khat\po,n}(B_n) > s \mid \PnBn^0, \PnBn^1, B_n\bi) \leq K_n \exp\left( - \frac{n_2}{c(M,\gamma)}s\right).
\end{equation*}
The same bound applies to the marginal probabilities $\Prob(R_{\khat\po,n}(B_n) > s)$. For all $u \geq 0$,
\[
    \Esp\bi[R_{\khat\po,n}\bi] 
    \leq u + \int_u^{\infty} K_n \exp\left( - \frac{n_2}{c(M,\gamma)}s\right) ds.
\]
This function of $u$ achieves a minimum of $c(M,\gamma)(1+\log(K_n))/n_2$ at $u = c(M,\gamma) \log(K_n)/n_2$. Thus,
\[
    \Esp\bi[R_{\khat\po,n}\bi] \leq c(M,\gamma) \frac{1 + \log(K_n)}{n_2}.
\]
The same bound applies to $\Esp\bi[T_{\ktilde\po,n}\bi]$. Taking the expectation in Equation~\eqref{eq::thm2_proof} gives the first asymptotic result stated in Equation~\eqref{eq::thm2_asym1}. This result, combined with Lemma 2 in \citet{dudoit_asymptotics_2005}, allows to derive the second asymptotic result given in Equation~\eqref{eq::thm2_asym2}.

\subsection{Proof of Corollary~\ref{cor::CV_censored_pobs}}


In the calculations below, the expectations are written with respect to the variables $O^* = (T^*,Z)$ and $O = (T^* \wedge C,\indicator{T^* \leq C},Z)$, for a triplet $(T^*,C,Z)$ independent from the data distribution $B_n$ and from the dataset $P_{B_n} = \{\PnBn^0,\PnBn^1,\PnBn^2\}$.
The results essentially rely on Equation~\eqref{eq::pobs_split_cond_mean}. On the one hand,
    \begin{align*}
        & \thetatildepo(\khat\po) - \thetatilde(\khat\po) \\
        & \quad = \EspBn \bii[\Esp\bii[ L\po\bi(\psihat_{\khat\po}(\PnBn^0),\PnBn^1,O\bi) \midi P_{B_n},B_n \bii]\bii] - \EspBn \bii[\Esp\bii[ L\bi(\psihat_{\khat\po}(\PnBn^0),O^*\bi) \midi P_{B_n},B_n \bii]\bii] \\
        & \quad = \EspBn\bii[\Esp\bii[\bi(\Gamma_O(\PnBn^1) - \psihat_{\khat\po}(\PnBn^0)(Z)\bi)^2 - \bi(T^* \wedge \tau - \psihat_{\khat\po}(\PnBn^0)(Z)\bi)^2 \midi P_{B_n},B_n \bii]\bii] \\
        & \quad = \EspBn\bii[\Esp\bii[\Gamma_O(\PnBn^1)^2 - (T^* \wedge \tau)^2 - 2 \psihat_{\khat\po}(\PnBn^0)(Z) \bi(\Gamma_O(\PnBn^1) - T^* \wedge \tau\bi) \midi P_{B_n},B_n \bii]\bii] \\
        & \quad = \EspBn\bii[\Esp\bii[\Esp\bi[\Gamma_O(\PnBn^1)^2 \mid Z, \PnBn^1, B_n\bi] - \Esp\bi[(T^* \wedge \tau)^2 \mid Z\bi] \\
        & \qquad - 2 \psihat_{\khat\po}(\PnBn^0)(Z) \bi(\Esp\bi[\Gamma_O(\PnBn^1) \mid Z, \PnBn^1,B_n \bi] - \Esp\bi[T^* \wedge \tau \mid Z \bi]\bi) \midi P_{B_n},B_n \bii]\bii] \\
        & \quad = \EspBn\bii[\Esp\bii[\Esp\bi[\Gamma_O(\PnBn^1)^2 \mid Z, \PnBn^1, B_n\bi] - \Esp\bi[(T^* \wedge \tau)^2 \mid Z\bi] \\
        & \qquad - 2 \psihat_{\khat\po}(\PnBn^0)(Z) \Esp[\xin \mid Z, \PnBn^1, B_n] \midi P_{B_n},B_n \bii]\bii].
    \end{align*}
    Similarly,
    \begin{align*}
        \thetatildepo(\ktilde\po) - \thetatilde(\ktilde\po) & = \EspBn\bii[\Esp\bii[\Esp\bi[\Gamma_O(\PnBn^1)^2 \mid Z, \PnBn^1, B_n\bi] - \Esp\bi[(T^* \wedge \tau)^2 \mid Z\bi] \\
        & \quad - 2 \psihat_{\ktilde\po}(\PnBn^0)(Z) \Esp[\xin \mid Z, \PnBn^1, B_n] \midi P_{B_n},B_n \bii]\bii].
    \end{align*}
    On the other hand,
    \begin{align*}
        & \theta^*_1 - \theta^* \\
        & \quad = \EspBn \bii[\Esp\bii[ L\po\bi(\psi^*_1,\PnBn^1,O\bi) \midi P_{B_n},B_n \bii]\bii] - \Esp\bii[ L\bi(\psi^*,O^*\bi) \bii] \\
        & \quad = \EspBn\bii[\Esp\bii[\bi(\Gamma_O(\PnBn^1) - \psi^*_1(Z)\bi)^2 - \bi(T^* \wedge \tau - \psi^*(Z)\bi)^2 \midi P_{B_n},B_n \bii]\bii] \\
        & \quad = \EspBn\bii[\Esp\bii[\Gamma_O(\PnBn^1)^2 - (T^* \wedge \tau)^2 + \psi^*_1(Z)^2 - \psi^*(Z)^2 \\
        & \qquad - 2 \Gamma_O(\PnBn^1) \psi^*_1(Z) + 2 (T^* \wedge \tau) \psi^*(Z) \midi P_{B_n},B_n \bii]\bii] \\
        & \quad = \EspBn\bii[\Esp\bii[ \Esp\bi[\Gamma_O(\PnBn^1)^2 \mid Z, \PnBn^1, B_n \bi] - \Esp\bi[(T^* \wedge \tau)^2 \mid Z \bi] - \bi(\psi^*_1(Z)^2 - \psi^*(Z)^2 \bi) \midi P_{B_n},B_n \bii]\bii] \\
        & \quad = \EspBn\bii[\Esp\bii[ \Esp\bi[\Gamma_O(\PnBn^1)^2 \mid Z, \PnBn^1, B_n \bi] - \Esp\bi[(T^* \wedge \tau)^2 \mid Z \bi] \\
        & \qquad - \Esp\bi[\xin \mid Z , \PnBn^1,B_n\bi]^2 - 2 \psi^*(Z) \Esp\bi[\xin \mid Z , \PnBn^1,B_n\bi] \midi P_{B_n},B_n \bii]\bii].
    \end{align*}
    Thus, if we let
    \begin{align*}
        \phi_{n_1}(k) & = \EspBn\bii[\Esp\bii[\Esp\bi[\xin \mid Z , \PnBn^1,B_n\bi]^2 \\
        & \qquad + 2\bi(\psi^*(Z) - \psihat_{k}(\PnBn^0)(Z)\bi)\Esp\bi[\xin \mid Z , \PnBn^1,B_n\bi] \midi P_{B_n},B_n \bii]\bii],
    \end{align*}
    we have, from Equation~\eqref{eq::fin_sam_pobs}, 
    \begin{align*}
        0 & \leq \Esp\bi[\thetatildepo(\khat\po) - \theta^*_1\bi] \leq (1+2 \gamma) \Esp\bi[ \thetatildepo(\ktilde\po) - \theta^*_1\bi] + 2 c(M,\gamma) \frac{1 + \log(K_n)}{n_2} \\
        \iff 0 & \leq \Esp\bi[\thetatilde(\khat\po) - \theta^* + \phi_{n_1}(\khat\po)\bi] \\
        & \leq (1+2 \gamma) \Esp\bi[ \thetatilde(\ktilde\po) - \theta^* + \phi_{n_1}(\ktilde\po)\bi] + 2 c(M,\gamma) \frac{1 + \log(K_n)}{n_2} \\
        \iff 0 & \leq \Esp\bi[\thetatilde(\khat\po) - \theta^*\bi] \\
        & \leq (1+2 \gamma) \Esp\bi[ \thetatilde(\ktilde\po) - \theta^*\bi] + \Esp\bi[(1+2\gamma) \phi_{n_1}(\ktilde\po) - \phi_{n_1}(\khat\po)\bi] + 2 c(M,\gamma) \frac{1 + \log(K_n)}{n_2}.
    \end{align*}
    
    To obtain the asymptotic results, we rely on Lemma~\ref{lemma::argmin_lim} below. It implies that $\thetatilde(\ktilde\po) \to \thetatilde(\ktilde)$ in distribution as $n \to \infty$. In addition, we assumed the pseudo-observations and candidate learners to be bounded (Assumption~\eqref{eq::assumption_thm_2}). Thus $\Esp[\thetatilde(\ktilde\po) - \thetatilde(\ktilde)] \to 0$ as $n \to \infty$. Besides, $\phi_{n_1}(k) = o_\Prob(1)$ and is bounded almost surely for all $k$, so that we also have $\Esp\bi[(1+2\gamma) \phi_{n_1}(\ktilde\po) - \phi_{n_1}(\khat\po)\bi] \to 0$ as $n \to \infty$.
    Hence the first asymptotic result (Equation~\eqref{eq::cor_asym1}) which, combined with Lemma 2 in \citet{dudoit_asymptotics_2005}, allows to derive the second asymptotic result (Equation~\eqref{eq::cor_asym2}).
    

\begin{lemma}\label{lemma::argmin_lim}
    $\thetatilde(\ktilde\po) \to \thetatilde(\ktilde)$ in probability as $n \to \infty$.
\end{lemma}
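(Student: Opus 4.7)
The plan is to reuse the risk decomposition computed in the proof of Corollary~\ref{cor::CV_censored_pobs} and then close with a standard argmin-stability argument. First I would rewrite that identity as, for every $k$,
\[
    \thetatildepo(k) = \thetatilde(k) + A_{n_1} + r_{n_1}(k),
\]
where $A_{n_1}$ gathers the two $k$-independent pieces $\Esp[\Gamma_O(\PnBn^1)^2 \mid Z, \PnBn^1, B_n]$ and $\Esp[(T^* \wedge \tau)^2 \mid Z]$, and
\[
    r_{n_1}(k) = -2\, \EspBn\bi[\Esp\bi[\psihat_k(\PnBn^0)(Z)\, \Esp[\xin \mid Z, \PnBn^1, B_n] \midi P_{B_n}, B_n\bi]\bi]
\]
carries all the dependence on $k$. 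Because $A_{n_1}$ cancels whenever two values of $k$ are compared, the proof reduces to controlling $r_{n_1}$ uniformly in $k$.

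Next I would show that $\sup_k |r_{n_1}(k)| = o_\Prob(1)$. The boundedness assumption $|\psihat_k(\PnBn^0)(Z)| \leq M$ from Equation~\eqref{eq::assumption_thm_2} yields the $k$-independent majoration
\[
    \sup_k |r_{n_1}(k)| \leq 2M\, \EspBn\bi[\Esp\bi[\bi|\Esp[\xin \mid Z, \PnBn^1, B_n]\bi| \midi P_{B_n}, B_n\bi]\bi].
\]
Taking full expectations and applying Jensen's inequality bounds this further by $2M\, \Esp[|\xin|]$. Since $\xin$ is almost surely bounded (both $\Gamma_O(\PnBn^1)$ and $T^* \wedge \tau$ are bounded by $M$) and $\xin = o_\Prob(1)$ by Equation~\eqref{eq::pobs_split_cond_mean}, dominated convergence gives $\Esp[|\xin|] \to 0$, and Markov's inequality then delivers the uniform convergence in probability.

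Finally I would invoke the two optimality inequalities $\thetatilde(\ktilde) \leq \thetatilde(\ktilde\po)$ and $\thetatildepo(\ktilde\po) \leq \thetatildepo(\ktilde)$. Substituting the decomposition into the second one (with $A_{n_1}$ cancelling) gives
\[
    \thetatilde(\ktilde\po) - \thetatilde(\ktilde) \leq r_{n_1}(\ktilde) - r_{n_1}(\ktilde\po) \leq 2\sup_k |r_{n_1}(k)|,
\]
and combining with the first inequality sandwiches $\thetatilde(\ktilde\po) - \thetatilde(\ktilde)$ between $0$ and an $o_\Prob(1)$ quantity, which is the claim.

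The main obstacle is the uniform-in-$k$ control of $r_{n_1}$: since the number of candidates $K_n$ may grow with $n$, a merely pointwise bound would not suffice. However, the uniform boundedness hypothesis on the candidate learners turns the supremum over $k$ into a deterministic bound independent of $k$, so uniformity comes essentially for free once one knows that the single residual $\xin$ from the pseudo-observation expansion is bounded and $o_\Prob(1)$.
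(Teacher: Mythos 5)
Your proposal is correct, and it reaches the conclusion by a genuinely different closing argument than the paper. Both proofs start from the same decomposition: expanding the square shows that $\thetatildepo(k)$ equals $\thetatilde(k)$ plus a $k$-independent term plus a cross term $r_{n_1}(k)$ involving $\Esp[\xin \mid Z, \PnBn^1, B_n]$ (the paper calls this cross term $\varepsilon_k(n)$ and reaches it through the bias--variance expansion of Lemma~\ref{lemma::risk_minimizer_quad_pobs}). Where you diverge is in how the perturbed argmin is handled. The paper introduces the order statistics of the risks $f_k(n)=\thetatilde(k)$ and argues by cases on a separation constant $C$: either some pair of risks is within $C$, or all pairs are separated by more than $C$ and the perturbation (claimed uniformly smaller than $C/2$ for large $n$) cannot invert the ordering, so $\ktilde\po=\ktilde$; it then lets $C\to 0$. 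You instead use the two optimality inequalities $\thetatilde(\ktilde)\leq\thetatilde(\ktilde\po)$ and $\thetatildepo(\ktilde\po)\leq\thetatildepo(\ktilde)$ to sandwich $\thetatilde(\ktilde\po)-\thetatilde(\ktilde)$ between $0$ and $2\sup_k|r_{n_1}(k)|$, and you control the supremum by the $k$-free majorant $2M\,\EspBn\bi[\Esp\bi[\lvert\Esp[\xin\mid Z,\PnBn^1,B_n]\rvert\midi P_{B_n},B_n\bi]\bi]$, which is $o_\Prob(1)$ by bounded convergence and Markov. Your route is shorter and, in my view, more robust: it quantifies the risk gap directly rather than arguing that the selected indices eventually coincide, it makes the uniformity over the growing library $\{1,\dots,K_n\}$ explicit through a bound that does not depend on $k$ at all (whereas the paper's ``for all $n\geq N$, for all $k$, $\lvert\varepsilon_k(n)\rvert<C/2$ with probability one'' is a stronger uniform almost-sure statement than what $o_\Prob(1)$ alone delivers), and it avoids the somewhat delicate case analysis over $C$. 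What the paper's argument buys in exchange is the slightly stronger qualitative conclusion that, under strict separation of the risks, the pseudo-observation selector eventually agrees with the oracle selector, not merely that their risks agree asymptotically.
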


\begin{proof}
    Let $f_k(n) = \Esp_{B_n}\bii[\Esp\bii[\bi(T^* \wedge \tau - \psihat_k(\PnBn^0)(Z)\bi)^2 \midi \PnBn^0, B_n \bii]\bii]$. We have $f_k(n) = \thetatilde(k)$, so that
    \begin{align*}
        \ktilde & = \argmin_{k \in \{1,\dots,K_n\}} f_k(n).
    \end{align*}
    On the other hand, using arguments similar to the proof of Lemma~\ref{lemma::risk_minimizer_quad_pobs}, we find
    \begin{align*}
        \ktilde\po & = \argmin_{k \in \{1,\dots,K_n\}} \thetatildepo(k) \\
        & = \argmin_{k \in \{1,\dots,K_n\}} \Esp_{B_n}\bii[\Esp\bii[\bi(\Gamma_O(\PnBn^1) - \psihat_k(\PnBn^0)(Z)\bi)^2 \midi \PnBn^0, \PnBn^1, B_n \bii]\bii] \\
        & = \argmin_{k \in \{1,\dots,K_n\}} \Esp_{B_n}\biii[\Esp\biii[\bii(\Esp\bi[\Gamma_O(\PnBn^1) \mid Z, \PnBn^1, B_n\bi] - \psihat_k(\PnBn^0)(Z)\bii)^2 \midii \PnBn^0, \PnBn^1, B_n \biii]\biii].
    \end{align*}
    Using Equation~\eqref{eq::pobs_split_cond_mean}, we write
    \begin{align*}
        & \Esp\biii[\bii(\Esp\bi[\Gamma_O(\PnBn^1) \mid Z, \PnBn^1, B_n\bi] - \psihat_k(\PnBn^0)(Z)\bii)^2 \midii \PnBn^0, \PnBn^1, B_n \biii] \\
        & = \Esp\biii[\bii(\Esp\bi[T^* \wedge \tau \mid Z\bi] + \Esp\bi[\xin \mid Z, \PnBn^1, B_n \bi] - \psihat_k(\PnBn^0)(Z)\bii)^2 \midii \PnBn^0, \PnBn^1, B_n \biii] \\
        & = \Esp\biii[\bii(\Esp\bi[T^* \wedge \tau \mid Z \bi] - T^* \wedge \tau \bii)^2 \midii \PnBn^0, \PnBn^1, B_n \biii] \\
        & \quad + \Esp\biii[\bii( T^* \wedge \tau + \Esp\bi[\xin \mid Z, \PnBn^1, B_n\bi] - \psihat_k(\PnBn^0)(Z) \bii)^2 \midii \PnBn^0, \PnBn^1, B_n \biii] \\
        & \quad + 2 \Esp\biii[\bii(\Esp\bi[T^* \wedge \tau \mid Z\bi] - T^* \wedge \tau\bii)\bii(T^* \wedge \tau + \Esp\bi[\xin \mid Z, \PnBn^1, B_n\bi] - \psihat_k(\PnBn^0)(Z)\bii)\midii \PnBn^0, \PnBn^1, B_n \biii].
    \end{align*}
    The second term can be developed into
    \begin{align*}
        & \Esp\biii[\bii( T^* \wedge \tau + \Esp\bi[\xin \mid Z, \PnBn^1, B_n \bi] - \psihat_k(\PnBn^0)(Z)\bii)^2 \midii \PnBn^0, \PnBn^1, B_n \biii] \\
        & = \Esp\biii[\bii( T^* \wedge \tau - \psihat_k(\PnBn^0)(Z) \bii)^2 \midii \PnBn^0, B_n \biii] + \Esp\biii[\Esp\bi[\xin \mid Z, \PnBn^1, B_n\bi]^2 \midii \PnBn^1, B_n \biii] \\
        & \quad + 2 \Esp\biii[\bi(T^* \wedge \tau\bi) \Esp\bi[\xin \mid Z, \PnBn^1, B_n\bi] \midii \PnBn^1, B_n\biii] - 2 \Esp\biii[\psihat_k(\PnBn^0)(Z) \xin \midii \PnBn^0, \PnBn^1, B_n \biii],
    \end{align*}
    while the third term simplifies into
    \begin{align*}
        & \Esp\biii[\bii(\Esp\bi[T^* \wedge \tau \mid Z\bi] - T^* \wedge \tau\bii)\bii(T^* \wedge \tau + \Esp\bi[\xin \mid Z, \PnBn^1, B_n\bi] - \psihat_k(\PnBn^0)(Z)\bii)\midii \PnBn^0, \PnBn^1, B_n \biii] \\
        & = \Esp\biii[\bii(\Esp\bi[T^* \wedge \tau \mid Z\bi] - T^* \wedge \tau \bii)\bii(T^* \wedge \tau\bii)\biii] \\
        & \quad + \Esp\biii[\bii(\Esp\bi[T^* \wedge \tau \mid Z\bi] - T^* \wedge \tau \bii)\bii(\Esp\bi[\xin \mid Z, \PnBn^1, B_n\bi] - \psihat_k(\PnBn^0)(Z)\bii)\midii \PnBn^0, \PnBn^1, B_n \biii] \\
        & = \Esp\biii[\bii(\Esp\bi[T^* \wedge \tau \mid Z\bi] - T^* \wedge \tau \bii)\bii(T^* \wedge \tau\bii)\biii].
    \end{align*}
    Removing all terms independent from $k$, we find
    \begin{align*}
        \ktilde\po & = \argmin_{k \in \{1,\dots,K_n\}} \bii\{ f_k(n) + \varepsilon_k(n) \bii\}
    \end{align*}
    where $\varepsilon_k(n) = - 2 \Esp_{B_n}\bii[\Esp\bii[\psihat_k(\PnBn^0)(Z)\xin \midi \PnBn^0, \PnBn^1, B_n \bii]\bii] = o_\Prob(1)$ as the $\psihat_k(\PnBn^0)$'s are bounded by assumption and $\xin = o_\Prob(1)$.

    Let $f_{(1)_n}(n)<\dots<f_{(K_n)_n}(n)$ denote the order statistics of  $f_1(n),\dots,f_{K_n}(n)$. The order statistics depends on $n$ both by the dependence of $f_1,\dots,f_{K_n}$ on $n$ and by the number $K_n$ increasing with $n$. 
    Asymptotically, either the $\{f_k(n) + \varepsilon_k(n)\}$'s are ordered in the same way as the $f_k(n)$'s, i.e. $\ktilde\po = \ktilde$, or the inversions are negligible in terms of risk. 
    Let $C > 0$.
    First suppose that there exist $k$, $k'$ and $n$ such that $\lvert f_k(n) - f_{k'}(n) \rvert < C$. This is equivalent to 
    \[
        \lvert \thetatilde(k) - \thetatilde(k') \rvert < C.
    \]
    
    Now suppose that for all $k$, $k'$ and for all $n$, $\lvert f_k(n) - f_{k'}(n) \rvert > C$. For all $k = 1,\dots,K_n$, $\varepsilon_k(n) =  o_\Prob(1)$, thus there exists $N > 0$ such that for all $n \geq N$, for all $k = 1,\dots,K_n$, $\vert \varepsilon_k(n) \rvert < C/2$ with probability one. Thus, for all $n \geq N$, if $f_{(k)_n}(n) < f_{(k')_n}(n)$, then
    \begin{align*}
        f_{(k)_n}(n) + \varepsilon_{(k)_n}(n) & < f_{(k)_n}(n) + \frac{C}{2} < \frac{f_{(k')_n}(n) + f_{(k)_n}(n)}{2} \\
        f_{(k')_n}(n) + \varepsilon_{(k')_n}(n) & > f_{(k)_n}(n) - \frac{C}{2} > \frac{f_{(k')_n}(n) + f_{(k)_n}(n)}{2},
    \end{align*}
    i.e. $f_{(k)_n}(n) + \varepsilon_{(k)_n}(n) < f_{(k')_n}(n) + \varepsilon_{(k')_n}(n)$. The order is preserved asymptotically, i.e. $\ktilde\po = \ktilde$, with probability one.
    We conclude by making $C$ tend towards $0$.

\end{proof}


\bibliography{biblio}

\end{document}


\maketitle

\section{Corollary of Theorem 2 in the form of a ratio}

Asymptotic results in Corollary~2.1 of the main document are given for the difference between the terms $\thetatilde(\khat\po) - \theta^*$ and $\thetatilde(\ktilde) - \theta^*$. To get asymptotic results in terms of ratio, additional assumptions are needed, see Corollary~\ref{cor::CV_censored_pobs2} below. These new assumptions essentially require that the risk of the best candidate learner does not converge to the optimal risk at a higher speed than the speed of convergence of the pseudo-observations. If the risk of the best candidate learner does not converge to the optimal risk (i.e. we do not have $\thetatilde(\ktilde) \to \theta^*$), which is often the case in real cases scenarios, then the assumptions~\eqref{eq::cor2_hyp1}, \eqref{eq::cor2_hyp2}, \eqref{eq::cor2_hyp3} and \eqref{eq::cor2_hyp4} are automatically fulfilled and we only need $\log(K_n)/(n p_{2,n}) \to 0$ as $n \to \infty$ to get the asymptotic results.
\setcounter{theorem}{2}
\setcounter{corollary}{1}
\begin{corollary} \label{cor::CV_censored_pobs2}
    Same setup and assumptions as in Theorem~2. 
    
    \noindent If 
    \[
        \frac{\log(K_n)}{n p_{2,n} \Esp[ \thetatilde(\ktilde) - \theta^*]} \to 0 \text{ as } n \to \infty,
    \]
    and
    \begin{equation}\label{eq::cor2_hyp1}
        \frac{\Esp\bi[\thetatilde(\ktilde\po) - \thetatilde(\ktilde)\bi]}{\Esp[ \thetatilde(\ktilde) - \theta^*]} \to 0 \text{ as } n \to \infty,
    \end{equation}
    and
    \begin{equation}\label{eq::cor2_hyp2}
        \frac{\Esp\bi[(1+2\gamma) \phi_{n_1}(\ktilde\po) - \phi_{n_1}(\khat\po)\bi]}{\Esp[ \thetatilde(\ktilde) - \theta^*]} \to 0 \text{ as } n \to \infty,
    \end{equation}
    then
    \begin{equation*}
        \frac{\Esp[ \thetatilde(\khat\po) - \theta^*]}{\Esp[ \thetatilde(\ktilde) - \theta^*]} \to 1 \text{ as } n \to \infty.
    \end{equation*}
    Similarly, if
    \[
        \frac{\log(K_n)}{n p_{2,n} (\thetatilde(\ktilde) - \theta^*)} \to 0 \text{ in probability as } n \to \infty,
    \]
    and
    \begin{equation}\label{eq::cor2_hyp3}
        \frac{\thetatilde(\ktilde\po) - \thetatilde(\ktilde)}{ \thetatilde(\ktilde) - \theta^*} \to 0 \text{ in probability as } n \to \infty,
    \end{equation}
    and
    \begin{equation}\label{eq::cor2_hyp4}
        \frac{(1+2\gamma) \phi_{n_1}(\ktilde\po) - \phi_{n_1}(\khat\po)}{\thetatilde(\ktilde) - \theta^*} \to 0 \text{ in probability as } n \to \infty,
    \end{equation}
    then
    \begin{equation*}
        \frac{\thetatilde(\khat\po) - \theta^*}{\thetatilde(\ktilde) - \theta^*} \to 1 \text{ in probability as } n \to \infty.
    \end{equation*}
\end{corollary} 

\section{Distribution of the weights in the Super Learner (complement to Section~4.2)}

In Section~4.2 of the main document, we give numerical results for the implementation of the super learner with standard or split pseudo-observations.  The training algorithms considered for the two methods are: the Linear Model (LM), the Lasso, the Generalized Additive Model (GAM), the Random Forests (RF) and the Neural Network (NN).
The performances of the two methods are further compared with the oracle method that applies the same algorithms directly to the unoberved true event times. 
We recall that two simulation scenarios are considered, where data are simulated according to a Cox model, with higher complexity in the second scheme (including more variables and interaction terms).
All super learners are trained using $6$-fold cross-validation.
The results in terms of mean squared error (MSE) are displayed in Figure~4 in the main document, for the three different methods in the two simulation schemes with increasing sample sizes $n\in\{100,200,300,400,500\}$. The whole procedure is repeated $80$ times for each sample size. Exact values can be found in Table~\ref{tab::mse_graph}. 
We complement these results with the corresponding distribution of the weights assigned to each learner, displayed in Figure~\ref{fig::beta_graph}. The median and the first and third quantiles of the assigned weights are reported in the figure.

\section{Applications on real data: additional results}

In Section 5 of the main document, we apply our prediction algorithm to maintenance and colon cancer datasets and use methods from a previous work~\citep[see][]{cwiling_comprehensive_2023} in order to analyze the results. These methods rely on the estimation of the censoring survival function, thus we first investigate the possible dependence between censoring and covariates. To that aim we fit the Cox model and the Random Survival Forests (RSF) to the data using the censoring time as the outcome. P-values are reported for the Cox model and measures of variables importance per permutation are reported for the RSF in Table~\ref{tab::cens_mtnc} for the \texttt{maintenance} dataset and in Table~\ref{tab::cens_colon} for the \texttt{colon} dataset. The censoring survival function is then estimated for the implementation of three IPCW methods used to estimate the MSE of the methods, construct predictions intervals, and evaluate the importance of the variables. In particular, a prediction interval for the true restricted time to event of level $90\%$ is computed for each data point. These intervals are computed with the IPCW Rank-One-Out Split Conformal algorithm. An illustration of these intervals, for some items randomly chosen in the dataset, is provided in Figure~\ref{fig::pred_intvs_mtnc} for the \texttt{maintenance} dataset, and in Figure~\ref{fig::pred_intvs_colon} for the \texttt{colon} dataset.

\bibliography{biblio}

\begin{table}[!ht]
    \begin{subtable}[h]{\textwidth}
        \centering
        \begin{tabular}{|r|lllll|}
        	\hline
        	n & 100 & 200 & 300 & 400 & 500 \\ 
        	\hline
        	LM & 0.197 (0.007) & 0.189 (0.006) & 0.191 (0.006) & 0.187 (0.005) & 0.188 (0.005) \\ 
        	Lasso & 0.197 (0.007) & 0.189 (0.005) & 0.191 (0.006) & 0.187 (0.005) & 0.188 (0.005) \\ 
        	GAM & 0.190 (0.009) & 0.182 (0.005) & 0.183 (0.006) & 0.179 (0.004) & 0.179 (0.005) \\ 
        	RF & 0.239 (0.017) & 0.169 (0.009) & 0.144 (0.008) & 0.128 (0.007) & 0.118 (0.008) \\ 
        	NN & \textbf{0.097} (0.009) & \textbf{0.084} (0.006) & \textbf{0.085} (0.005) & \textbf{0.083} (0.006) & \textbf{0.081} (0.007) \\ 
        	\hline
        	SL & \textbf{0.097} (0.008) & \textbf{0.084} (0.006) & \textbf{0.085} (0.004) & \textbf{0.083} (0.006) & \textbf{0.081} (0.006) \\ 
        	\hline
        \end{tabular}
        \caption{Simulation scheme 1, True restricted event times}
        \label{tab::scheme_B_oracle}
    \end{subtable}
    
    \begin{subtable}[h]{\textwidth}
        \centering
        \begin{tabular}{|r|lllll|}
        	\hline
        	n & 100 & 200 & 300 & 400 & 500 \\ 
        	\hline
        	LM & \textbf{0.212} (0.011) & 0.199 (0.009) & 0.196 (0.006) & 0.191 (0.006) & 0.190 (0.005) \\ 
        	Lasso & 0.213 (0.012) & 0.199 (0.009) & 0.196 (0.006) & 0.191 (0.005) & 0.190 (0.005) \\ 
        	GAM & 0.213 (0.010) & 0.199 (0.010) & 0.191 (0.007) & 0.185 (0.006) & 0.185 (0.006) \\ 
        	RF & 0.297 (0.029) & 0.229 (0.016) & 0.202 (0.016) & 0.184 (0.012) & 0.174 (0.011) \\
        	NN & 0.213 (0.062) & \textbf{0.140} (0.021) & \textbf{0.115} (0.012) & \textbf{0.105} (0.009) & \textbf{0.098} (0.008) \\ 
        	\hline
        	SL & \textbf{0.170} (0.020) & \textbf{0.133} (0.015) & \textbf{0.115} (0.012) & \textbf{0.104} (0.007) & 0.099 (0.007) \\ 
        	SL (split) & \textbf{0.173} (0.025) & \textbf{0.133} (0.014) & 0.118 (0.013) & \textbf{0.105} (0.008) & \textbf{0.098} (0.007) \\ 
        	\hline
        \end{tabular}
        \caption{Simulation scheme 1, Pseudo-observations}
        \label{tab::scheme_B_pobs}
     \end{subtable}

     \begin{subtable}[h]{\textwidth}
        \centering
        \begin{tabular}{|r|lllll|}
        	\hline
        	n & 100 & 200 & 300 & 400 & 500 \\ 
        	\hline  
        	LM & 0.229 (0.012) & 0.211 (0.007) & \textbf{0.206} (0.007) & 0.200 (0.006) & 0.200 (0.006) \\ 
        	Lasso & \textbf{0.224} (0.012) & \textbf{0.208} (0.007) & \textbf{0.206} (0.007) & 0.200 (0.006) & 0.198 (0.006) \\
        	GAM & 0.239 (0.014) & 0.216 (0.007) & 0.210 (0.007) & 0.203 (0.006) & 0.202 (0.006) \\ 
        	RF & 0.257 (0.012) & 0.221 (0.006) & 0.208 (0.007) & \textbf{0.197} (0.006) & \textbf{0.188 }(0.006) \\
        	NN & 0.424 (0.060) & 0.267 (0.030) & 0.218 (0.022) & 0.201 (0.015) & 0.195 (0.013) \\
        	\hline
        	SL & 0.225 (0.013) & \textbf{0.202} (0.009) & \textbf{0.194} (0.008) & \textbf{0.187} (0.007) & \textbf{0.180} (0.007) \\ 
        	\hline
        \end{tabular}
        \caption{Simulation scheme 2, True restricted event times}
        \label{tab::scheme_C_oracle}
    \end{subtable}
    
    \begin{subtable}[h]{\textwidth}
        \centering
        \begin{tabular}{|r|lllll|}
        	\hline
        	n & 100 & 200 & 300 & 400 & 500 \\ 
        	\hline
        	LM & 0.271 (0.029) & 0.229 (0.012) & 0.218 (0.010) & \textbf{0.209} (0.008) & 0.207 (0.007) \\ 
        	Lasso & \textbf{0.250} (0.023) & \textbf{0.220} (0.008) & \textbf{0.214} (0.008) & \textbf{0.209} (0.007) & 0.206 (0.007) \\ 
        	GAM & 0.302 (0.032) & 0.240 (0.012) & 0.223 (0.010) & 0.215 (0.007) & 0.212 (0.008) \\ 
        	RF & 0.284 (0.024) & 0.240 (0.011) & 0.221 (0.008) & 0.213 (0.007) & \textbf{0.203} (0.007) \\ 
        	NN & 0.530 (0.104) & 0.381 (0.048) & 0.302 (0.051) & 0.257 (0.044) & 0.238 (0.026) \\ 
        	\hline
        	SL & 0.257 (0.024) & \textbf{0.217} (0.007) & \textbf{0.209} (0.008) & \textbf{0.199} (0.008) & \textbf{0.193} (0.008) \\ 
        	SL (split) & 0.256 (0.026) & \textbf{0.217 }(0.009) & \textbf{0.209} (0.009) & \textbf{0.200} (0.007) & \textbf{0.194} (0.007) \\ 
        	\hline
        \end{tabular}
        \caption{Simulation scheme 2, Pseudo-observations}
        \label{tab::scheme_C_pobs}
     \end{subtable}
     \caption{Mean Squared Errors (MSE) for the restricted time prediction with algorithms implemented on the true uncensored observations (Tables \ref{tab::scheme_B_oracle} and \ref{tab::scheme_C_oracle}) and on the pseudo-observations (Tables \ref{tab::scheme_B_pobs} and \ref{tab::scheme_C_pobs}). The considered algorithms are: the linear model (LM), the Lasso, the generalized additive model (GAM), the random forests (RF), the neural network (NN) and the super learner (SL) combining all these algorithms. On the pseudo-observations, standard and split pseudo-observations-based super learners are implemented. The sample size of the training set ranges from $n=100$ to $n=500$ and two simulation schemes are considered. The MSEs of all the algorithms are computed on an independent test set of size $1000$ and the whole procedure is repeated $80$ times. The median value is reported, followed by the median absolute deviation in brackets. For each value of $n$, the smallest median MSE value among the candidate learners is highlighted in bold. The MSE values for the standard and split pseudo-observations-based super learners are highlighted in bold if they are lower than the best candidate learner.}
     \label{tab::mse_graph}
\end{table}

\begin{figure}[ht!]
    \centering
    
    \subfloat[Simulation scheme 1]{%
        \includegraphics[width=.91\linewidth]{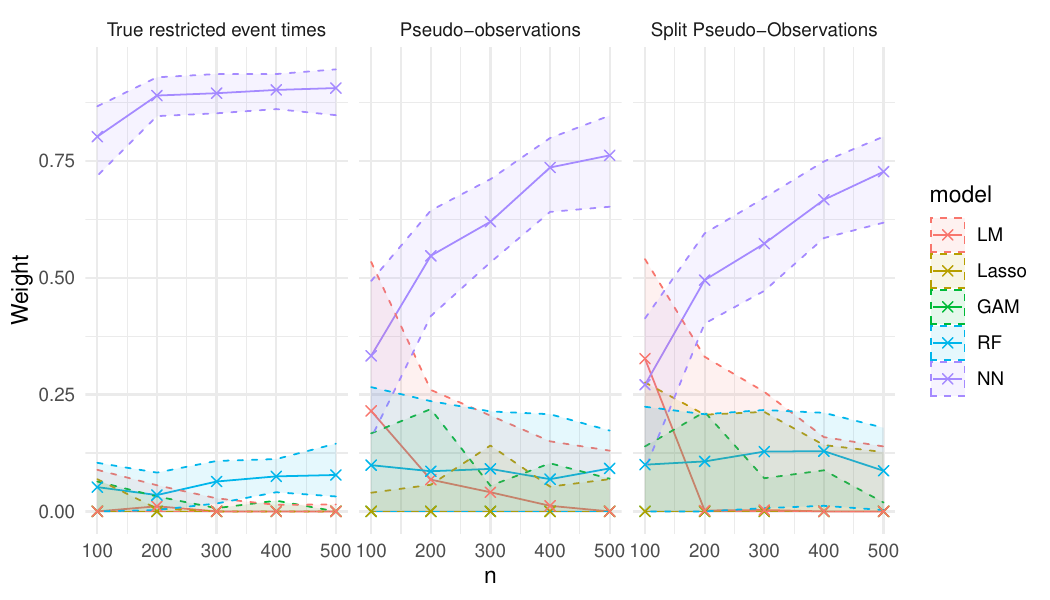}%
        \label{fig::beta_graph_B}%
    }
    
    \subfloat[Simulation scheme 2]{%
        \includegraphics[width=.91\linewidth]{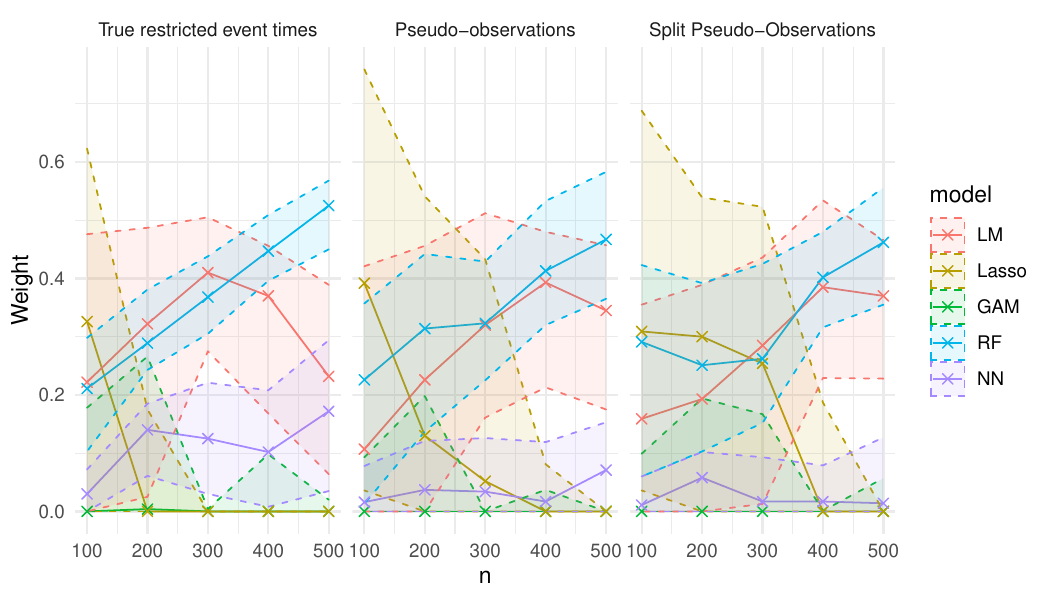}%
        \label{fig::beta_graph_C}%
    }

    \caption{Weights assigned to the candidate learners in the super learner implemented on the true uncensored observations (first column), the standard pseudo-observations (second column) and the split pseudo-observations (third column). The considered algorithms are: the linear model (LM), the Lasso, the generalized additive model (GAM), the random forests (RF) and the neural network (NN). The sample size of the training set ranges from $n=100$ to $n=500$ and two simulation schemes are considered in the top and bottom rows. The whole procedure is repeated $80$ times. Values of the median (cross), first and third quantiles (dashed lines) of the weights are reported.}
    \label{fig::beta_graph}
\end{figure}

\clearpage

\begin{table}[!ht]
    \begin{subtable}[h]{0.45\textwidth}
        \centering
        \begin{tabular}{|ll|r|}
            \hline
            Variable  &  & P-value \\
            \hline
            Pressure  &  & 0.568 \\
            Moisture  &  & 0.742 \\
            Temperature  &  & 0.705 \\
            Team  & B & 0.234 \\
              & C & 0.948 \\
            Provider  & 2 & 0.161 \\
              & 3 & 0.099 \\
              & 4 & 0.143 \\
            \hline
        \end{tabular}
        \caption{P-values for the Cox model}
        \label{fig::cens_cox_mtnc}
    \end{subtable}
    \hfill
    \begin{subtable}[h]{0.45\textwidth}
        \centering
        \begin{tabular}{|l|r|}
            \hline
            Variable  & Importance \\
            \hline
            Pressure  & -0.003 \\
            Moisture  & -0.009 \\
            Temperature  & -0.002 \\
            Team  & 0.004 \\
            Provider  & -0.003 \\
            \hline
        \end{tabular}
        \caption{Variable importance for the RSF}
        \label{fig::cens_rsf_mtnc}
     \end{subtable}
     \caption{Measures of dependence between censoring and covariates on the \texttt{maintenance} dataset. A Cox model and a RSF are fitted to the data using the censoring time as the outcome. P-values are reported for the Cox model while measures of variables importance per permutation are reported for the RSF. The global p-value for the Cox model is equal to $0.604$.}
     \label{tab::cens_mtnc}
\end{table}

\begin{figure}[!ht]
    \centering
    \includegraphics[scale=0.77]{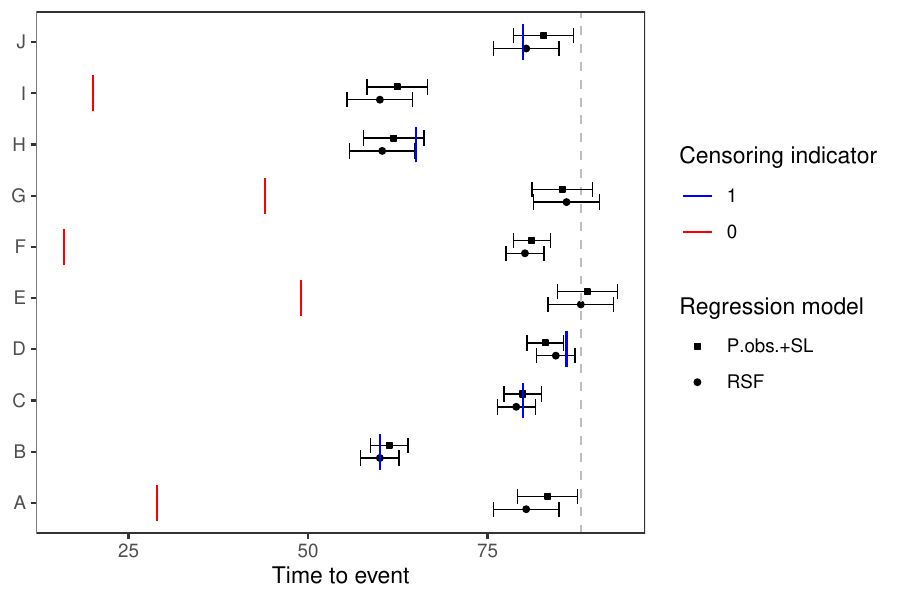}
    \caption{Prediction intervals at the $90\%$ level constructed with the IPCW Rank-One-Out Split Conformal algorithm~\citep[see][]{cwiling_comprehensive_2023} for the RSF and the super learner based on pseudo-observations on the \texttt{maintenance} dataset. All data are used in the training, and the prediction intervals are displayed for $10$ random machines. The grey dotted line represents the time horizon $\tau = 88$. The segments, red for censored and blue for uncensored times, are placed at the minimum between the observed times and $\tau$.}
    \label{fig::pred_intvs_mtnc}
\end{figure}

\begin{table}[!ht]
    \begin{subtable}[h]{0.45\textwidth}
        \centering
        \begin{tabular}{|ll|r|}
            \hline
            Variable  &  & P-value \\
            \hline
            Treatment  & Levamisole & 0.187 \\
             & Levamisole+5-FU & 0.144 \\
            Sex  &  & 0.238 \\
            Age  &  & 0.437 \\
            Obstruction  &  & 0.580 \\
            Perforation  &  & 0.391 \\
            Adherence  &  & 0.576 \\
            Nodes  &  & 0.117 \\
            Differentiation  & 2 & 0.882 \\
              & 3 & 0.741 \\
            Spread  & 2 & 0.147 \\
              & 3 & 0.085 \\
              & 4 & 0.760 \\
            Surgery & & 0.108 \\
            \hline
        \end{tabular}
        \caption{P-values for the Cox model}
        \label{fig::cens_cox_colon}
    \end{subtable}
    \hfill
    \begin{subtable}[h]{0.45\textwidth}
        \centering
        \begin{tabular}{|l|r|}
            \hline
            Variable  & Importance \\
            \hline
            Treatment  & -0.003 \\
            Sex  & -0.002 \\
            Age  & 0.004 \\
            Obstruction  & -0.005 \\
            Perforation  & -0.001 \\
            Adherence  & -0.002 \\
            Nodes  & 0.005 \\
            Differentiation  & -0.011 \\
            Spread  & 0.009 \\
            Surgery & 0.015 \\
            \hline
        \end{tabular}
        \caption{Variable importance for the RSF}
        \label{fig::cens_rsf_colon}
     \end{subtable}
     \caption{Measures of dependence between censoring and covariates on the \texttt{colon} dataset. A Cox model and a RSF are fitted to the data using the censoring time as the outcome. P-values are reported for the Cox model while measures of variables importance per permutation are reported for the RSF. The global p-value for the Cox model is equal to $0.383$.}
    \label{tab::cens_colon}
\end{table}


\begin{figure}[!ht]
    \centering
    \includegraphics[scale=0.77]{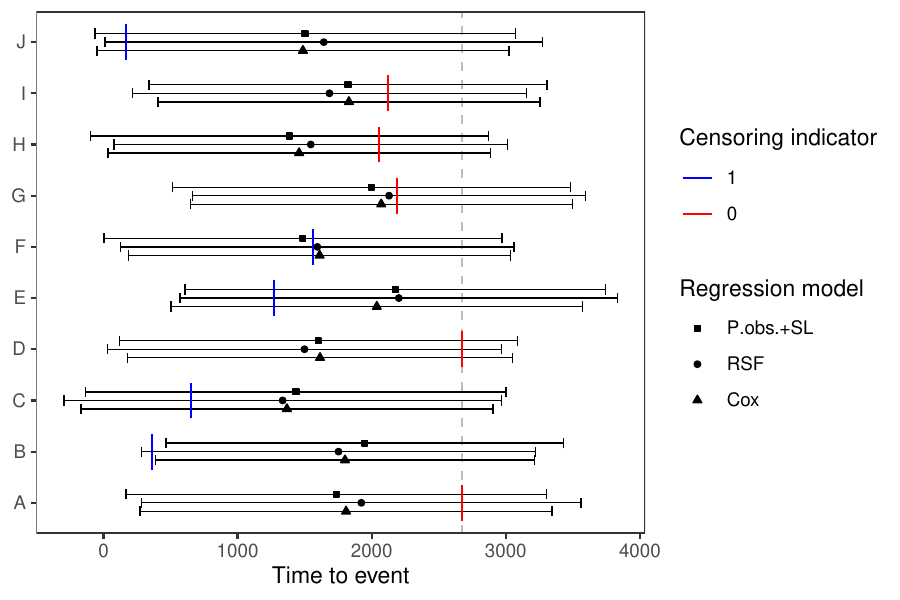}
    \caption{Prediction intervals at the $90\%$ level constructed with the IPCW Rank-One-Out Split Conformal algorithm~\citep[see][]{cwiling_comprehensive_2023} for the Cox model, the RSF and the super learner based on pseudo-observations on the \texttt{colon} dataset. All data are used in the training, and the prediction intervals are displayed for $10$ random machines. The grey dotted line represents the time horizon $\tau = 2672$. The segments, red for censored and blue for uncensored times, are placed at the minimum between the observed times and $\tau$.}
    \label{fig::pred_intvs_colon}
\end{figure}